\documentclass[11pt]{article}
\usepackage[utf8]{inputenc}
\usepackage[a4paper,margin=1in]{geometry}
\usepackage{amsmath,amssymb,amsthm}
\usepackage{xcolor}
\usepackage{hyperref}
\usepackage{microtype}
\usepackage{comment}
\hypersetup{
 colorlinks=true,
 linkcolor=blue!50!black,
 citecolor=blue!50!black,
 urlcolor=blue!50!black
}

\newtheorem{theorem}{Theorem}[section]
\newtheorem{lemma}[theorem]{Lemma}
\newtheorem{corollary}[theorem]{Corollary}
\newtheorem{proposition}[theorem]{Proposition}
\newtheorem{remark}[theorem]{Remark}

\newcounter{introresult}

\newtheorem{introtheorem}[introresult]{Theorem}
\newtheorem{introcorollary}[introresult]{Corollary}
\newtheorem*{introcorollary*}{Corollary}

\newcommand{\rad}{\operatorname{rad}}

\title{Triple torsion, triple cup products, and embedding obstructions for rational homology 3-spheres}
\author{Weizhe Niu}
\date{}

\begin{document}
\maketitle

\begin{abstract} Freedman and Krushkal introduced a triple torsion linking form for rational homology \(3\)-spheres and used it to obstruct locally flat embeddings in \(S^4\). For every odd prime \(p\), we identify their triple torsion form, computed with parameter \(t=p\) on rational homology \(3\)-spheres whose first homology has exponent \(p\), with the mod-\(p\) triple cup product under torsion-linking duality. For algebraically split \(\pm p\)-framed surgery links, this gives a signed formula in terms of Milnor's integral length-three invariants \(\bar\mu_{ijk}\), with the framing-sign factor dictated by torsion-linking duality. We then use Borromean band-sums to realize arbitrary mod-\(p\) triple cup tensors on rational homology \(3\)-spheres with \( H_1\cong(\mathbb Z/p)^6 \) and fixed hyperbolic ordinary torsion linking form. Finally, using the classical spinor/Klein model for the split six-dimensional quadratic space, we classify the tensors with no dual null Hantzsche pair. This produces, for every odd prime \(p\), a rational homology \(3\)-sphere with hyperbolic ordinary torsion linking form but with no locally flat embedding in \(S^4\), and indeed no locally flat embedding in any integer homology \(4\)-sphere. 
\end{abstract}


\section{Introduction}

Which closed orientable \(3\)-manifolds embed in \(S^4\) is a classical
problem in low-dimensional topology.  Freedman's work implies that every
integral homology \(3\)-sphere embeds topologically in \(S^4\)
\cite{Freedman82}.  For rational homology \(3\)-spheres which are not
integral homology spheres, torsion in first homology gives genuine
embedding obstructions.

The first obstruction is Hantzsche's classical linking-form obstruction
\cite{Hantzsche}.  If a rational homology \(3\)-sphere \(M\) embeds in
\(S^4\), then its \textit{ordinary torsion linking form}
\[
\lambda_2\colon H_1(M;\mathbb Z)\times H_1(M;\mathbb Z)
\longrightarrow \mathbb Q/\mathbb Z
\]
is hyperbolic.  Equivalently, \(H_1(M;\mathbb Z)\) decomposes as a direct
sum of two dual isotropic summands.  This condition is necessary for
embedding, but it is not sufficient.

Freedman and Krushkal introduced a \textit{triple torsion linking form}
\(\lambda_3\) for rational homology \(3\)-spheres \cite{FK}.  The invariant
is defined on an asymmetric domain: \(\lambda_3(x,y,z)\) is defined when
\[
\lambda_2(x,y)=\lambda_2(x,z)=0.
\]
In particular, it is defined on triples contained in a Hantzsche
Lagrangian.  They proved that if a rational homology \(3\)-sphere embeds
locally flatly in \(S^4\), then \(\lambda_3\) vanishes on each of the two
Hantzsche Lagrangians arising from the complementary regions; the same
obstruction applies to locally flat embeddings in integer homology
\(4\)-spheres \cite[Theorem~1.2 and the following remark]{FK}.  They constructed an example with first homology \((\mathbb Z/3)^6\),
hyperbolic ordinary torsion linking form, and nonzero triple torsion
obstruction.

The first point of this paper is that, in exponent \(p\), the Freedman--Krushkal invariant is a familiar cohomological object. The mod-\(p\) triple cup product is classical in the study of cohomology rings of \(3\)-manifolds, going back to Sullivan and Turaev \cite{Sullivan,TuraevCohomologyRings}. Motivated by the questions in Freedman--Krushkal's final section, we give an exponent-\(p\) cup-product interpretation of their invariant and, for algebraically split \(\pm p\)-framed surgery links, a signed Milnor \(\bar\mu_{ijk}\)-formula.

Let \(p\) be an odd prime and suppose \(H_1(M;\mathbb Z)\) has exponent
\(p\).  We compute \(\lambda_3\) with Freedman--Krushkal exponent
\(t=p\), and write
\[
\kappa_p\colon \frac1p\mathbb Z/\mathbb Z\longrightarrow\mathbb F_p,
\qquad
\kappa_p\left(\frac ap\right)=a\bmod p.
\]
For \(x\in H_1(M;\mathbb Z)\), define the linking-dual cohomology class
\(
x^\lambda\in H^1(M;\mathbb F_p)
\)
by
\[
x^\lambda(u)=\kappa_p(\lambda_2(x,u)).
\]

\begin{introtheorem}[FK/cup-product identification]
\label{thm:intro-A}
Let \(M\) be a closed oriented rational homology \(3\)-sphere whose first
homology \(H_1(M;\mathbb Z)\) has exponent \(p\), where \(p\) is an odd prime. For $x,y,z\in H_1(M;\mathbb Z)$ with 
\(
\lambda_2(x,y)=\lambda_2(x,z)=0,
\)
we have
\[
\kappa_p\bigl(\lambda_3^p(x,y,z)\bigr)
=
\left\langle x^\lambda\cup y^\lambda\cup z^\lambda,[M]\right\rangle.
\]

\end{introtheorem}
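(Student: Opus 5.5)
The plan is to reduce both sides to one triple intersection number of integral $2$-chains, read modulo $p$. First I unwind the Freedman--Krushkal definition with parameter $t=p$. Since $H_1(M;\mathbb Z)$ has exponent $p$, I choose integral $2$-chains $X,Y,Z$ in general position with $\partial X=p\,x$, $\partial Y=p\,y$, $\partial Z=p\,z$, taking $x,y,z$ to be embedded representative curves.

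The key observation is that $X$, reduced mod $p$, is an $\mathbb F_p$-cycle. I claim its Poincaré dual in $H^1(M;\mathbb F_p)$ is exactly $x^\lambda$. To see this, take a $1$-cycle $u$ transverse to $X$. The intersection number $X\cdot u$ equals $p\,\lambda_2(x,u)$ viewed as an integer. Hence
\[
\kappa_p(\lambda_2(x,u)) = \tfrac1p (X\cdot u) \bmod p .
\]
Hmm, this step needs care, because $X\cdot u$ itself is $p$ times the linking number. So I will instead use $\tfrac1p X$ as a rational chain, or equivalently reinterpret via the Bockstein. The cleanest statement I aim for is
\[
\mathrm{PD}\bigl(x^\lambda\bigr)=\bigl[\,X \bmod p\,\bigr]\in H_2(M;\mathbb F_p).
\]
I will verify this by pairing with $H_1(M;\mathbb F_p)$. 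For exponent $p$, every class of $H_1(M;\mathbb F_p)$ is the reduction of an integral class $u$. The mod-$p$ intersection of $X$ with $u$ computes $\lambda_2(x,u)$ through the standard identification of the linking form with the Bockstein composed with Poincaré duality.

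Given this identification, the right-hand side becomes the mod-$p$ triple intersection number of the cycles $X$, $Y$, $Z$ mod $p$. This uses the standard fact that cup product is Poincaré dual to transverse intersection, including for $\mathbb F_p$-cycles.

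Next I expand the FK formula. As I recall it, the vanishing of $\lambda_2(x,y)$ and $\lambda_2(x,z)$ lets one correct the $1$-chain $X\cap Y$ into a chain $C$ whose boundary is divisible by $p$. One then sets $\lambda_3^p(x,y,z)$ to be a normalized linking or intersection of $C$ with $z$ (equivalently with $Z$), taken in $\tfrac1p\mathbb Z/\mathbb Z$. I will rewrite this quantity as
\[
\tfrac1p\,\#(X\cap Y\cap Z) \;+\; \text{correction terms},
\]
where the correction terms come from the boundary pieces $x\cap Y$, $X\cap y$ and from the capping chains used in the correction. I then show each correction term is an integer multiple of $1/p$ that becomes $0$ after applying $\kappa_p$, or else is absorbed by the freedom in choosing $X,Y,Z$. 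The hypotheses $\lambda_2(x,y)=\lambda_2(x,z)=0$ enter exactly here: they make the boundary intersection counts $x\cdot Y$ and $x\cdot Z$ divisible by $p^2$, so the resulting terms vanish mod $p$.

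I expect the main obstacle to be this bookkeeping of boundary and correction terms, together with signs and orientation conventions. In particular I must match the FK convention for $\lambda_2$ and the orientation of $[M]$ with the cup-product sign, and I must check that the asymmetric domain of $\lambda_3$ is precisely what kills the correction terms. I would first test the sign normalization on a model: $M$ obtained by $p$-framed surgery on the Borromean rings. There both sides should equal $\pm\bar\mu_{123}$, which fixes any global sign. After that, the general argument is independence of choices plus the chain-level computation above.
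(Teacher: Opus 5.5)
Your treatment of the right-hand side is essentially sound. With the paper's convention $\lambda_2(a,b)=\frac1p(A\cdot b)$ one has directly $X\cdot u\equiv\kappa_p(\lambda_2(x,u))\pmod p$. Your intermediate formula with an extra $\frac1p$ is wrong, and no Bockstein is needed. So $\operatorname{PD}[X \bmod p]=x^\lambda$, and the triple cup product is the mod-$p$ triple intersection of $X,Y,Z$.

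The gap is on the Freedman--Krushkal side: you never use their actual definition. They do not define $\lambda_3^p(x,y,z)$ by correcting $X\cap Y$ and pairing with $z$. Instead they take a connected once-bordered surface $\Sigma$ with $\partial\Sigma=px$, modified in its interior to miss $y$ and $z$; this is where $\lambda_2(x,y)=\lambda_2(x,z)=0$ is used. They choose a symplectic basis $\{\gamma_\ell,\delta_\ell\}$ of $H_1(\Sigma)$ and chains with $\partial C_\ell=p\gamma_\ell$, $\partial D_\ell=p\delta_\ell$, and set
\[
\lambda_3^p(x,y,z)=\frac1p\sum_\ell\bigl((C_\ell\cdot y)(D_\ell\cdot z)-(C_\ell\cdot z)(D_\ell\cdot y)\bigr).
\]
Your correction terms belong to a construction you only half recall, so the whole comparison is deferred to bookkeeping that is never specified. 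The one concrete claim you make about it is also off: $\lambda_2(x,y)=0$ gives $Y\cdot x\equiv 0\pmod p$, not mod $p^2$.

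The missing step is short once this formula is on the table. For any $\eta\in H_1(\Sigma)$ and any $C_\eta$ with $\partial C_\eta=p\,i_*\eta$, one has $C_\eta\cdot y\equiv\kappa_p(\lambda_2(i_*\eta,y))=(i^*y^\lambda)(\eta)\pmod p$. Hence $\kappa_p(\lambda_3^p)$ is literally the symplectic-basis expression for $\langle i^*y^\lambda\cup i^*z^\lambda,[\Sigma,\partial\Sigma]\rangle$, using $H^1(\Sigma,\partial\Sigma;\mathbb F_p)\cong H^1(\Sigma;\mathbb F_p)$ for one boundary component. The relative fundamental chain pushes forward to the mod-$p$ cycle $\bar\Sigma$. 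Naturality and $\operatorname{PD}[\bar\Sigma]=x^\lambda$ then give $\langle y^\lambda\cup z^\lambda\cup x^\lambda,[M]\rangle$, and moving the degree-one class to the front costs no sign.

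If you want to keep your triple-intersection picture, take $X=\Sigma$. Then $Y\cap\Sigma$ and $Z\cap\Sigma$ are relative cycles on $\Sigma$ that are, mod $p$, Lefschetz dual to $i^*y^\lambda$ and $i^*z^\lambda$. But you still need the FK formula above to connect that pairing to $\lambda_3^p$.

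Finally, testing on Borromean surgery can only fix a sign after an identity up to a universal sign has been proved, and that is exactly what is missing. The statement is an exact equality, so the conventions must be tracked in the argument itself.
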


Our second theorem is a surgery interpretation.  For algebraically split
surgery links, the same tensor is Milnor's length-three tensor, with the
framing signs forced by torsion-linking duality.

\begin{introtheorem}[Surgery/Milnor formula]
\label{thm:intro-B}
Let
\[
L=L_1\cup\cdots\cup L_m\subset S^3
\]
be an oriented algebraically split framed link with framings
\(
\varepsilon_i p, \varepsilon_i=\pm1,
\)
and let \(M=S^3_L\).  Let \(m_i\in H_1(M;\mathbb Z)\) be the meridian
classes, and let \(\bar\mu_{ijk}(L)\) be Milnor's integral length-three
invariant.  Then, for \(i<j<k\),
\[
\kappa_p\bigl(\lambda_3^p(m_i,m_j,m_k)\bigr)
=
 s_\mu\,\varepsilon_i\varepsilon_j\varepsilon_k\,
 \bar\mu_{ijk}(L)
\quad\text{in }\mathbb F_p,
\]
where \(s_\mu\in\{\pm1\}\) is the global sign determined by the chosen
Milnor and triple-cup conventions.  More generally, if
\(x_r=\sum_i a_{ri}m_i\), \(r=1,2,3\), satisfy the FK domain condition,
then
\[
\kappa_p\bigl(\lambda_3^p(x_1,x_2,x_3)\bigr)
=
 s_\mu
 \sum_{i<j<k}
 \varepsilon_i\varepsilon_j\varepsilon_k\,
 \bar\mu_{ijk}(L)
 \det(a_{ri},a_{rj},a_{rk}).
\]
\end{introtheorem}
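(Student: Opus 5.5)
The plan is to deduce Theorem~\ref{thm:intro-B} from Theorem~\ref{thm:intro-A}. The work then reduces to a computation of the mod-\(p\) triple cup product of a surgered manifold, together with bookkeeping of the duality signs.

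\textbf{Step 1 (homology and linking).} Since \(L\) is algebraically split with framings \(\varepsilon_i p\), the linking matrix is \(\operatorname{diag}(\varepsilon_1 p,\dots,\varepsilon_m p)\). Hence \(H_1(M;\mathbb Z)\cong(\mathbb Z/p)^m\) is generated by the meridians \(m_i\), so the exponent is \(p\) and Theorem~\ref{thm:intro-A} applies. The linking form is minus (or plus, depending on convention) the inverse of the linking matrix, so
\[
\lambda_2(m_i,m_j)=\pm\delta_{ij}\,\frac{\varepsilon_i}{p}.
\]
Consequently \(m_i^\lambda=\pm\varepsilon_i\,\mu_i^*\), where \(\mu_i^*\in H^1(M;\mathbb F_p)\) is the basis dual to the meridians. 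Here I use \(\varepsilon_i^{-1}=\varepsilon_i\). Any convention-dependent global sign is absorbed into \(s_\mu\).

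\textbf{Step 2 (cup products of the dual basis).} I would show that
\[
\langle \mu_i^*\cup\mu_j^*\cup\mu_k^*,[M]\rangle=\pm\bar\mu_{ijk}(L)\bmod p
\]
for distinct \(i,j,k\), with a sign independent of \(i,j,k\) and of the framings. This is the main obstacle. My first approach is Turaev's/Cochran's computation of triple cup products of surgered manifolds via Massey products in the link complement \(X=S^3\setminus\nu L\). Because \(L\) is algebraically split, the classes \(\mu_i^*\) restrict to the mod-\(p\) duals of the meridians on \(X\), and their pairwise cup products vanish in \(H^2(X)\). Moreover, the mod-\(p\) cohomology of \(M\) is glued from \(X\) and solid tori in which each longitude is \(\equiv0\) modulo \(p\), because \(\mathrm{lk}=0\) and the framing is divisible by \(p\).

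The key point to verify is that \(\mu_i^*\) extends over the solid tori. This holds because \(\mu_i^*\) vanishes on every framed longitude mod \(p\): each framed longitude is homologous in \(X\) to \(\varepsilon_i p\,m_i\), which is \(0\) mod \(p\). Given this, the triple product on \([M]\) is computed by a Massey-product evaluation on the boundary tori, and that evaluation equals the Milnor invariant by Turaev's theorem (equivalently, Cochran's description of \(\bar\mu_{ijk}\) as evaluations of Massey products \(\langle\mu_i^*,\mu_j^*\rangle\) on longitudes).

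As a fallback, I would verify the identity on the Borromean rings, where \(M\) is a sum of \(S^1\times S^1\times S^1\)-type pieces mod \(p\). I would then extend it by multilinearity, using that both sides are additive under Borromean band-sum and vanish when \(i,j,k\) are not distinct.

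\textbf{Step 3 (multilinearity).} For \(i,j,k\) pairwise distinct, the cube of a single degree-one class \(\mu_i^*\) vanishes since \(p\) is odd and the cup product is graded-commutative. The triple cup form on \(H^1(M;\mathbb F_p)\) is therefore alternating. Expanding \(x_r^\lambda=\sum_i a_{ri}\,(\pm\varepsilon_i)\,\mu_i^*\) and collecting terms over \(i<j<k\) produces the determinant \(\det(a_{ri},a_{rj},a_{rk})\), with coefficient \(\varepsilon_i\varepsilon_j\varepsilon_k\bar\mu_{ijk}(L)\). The cubed convention sign \((\pm1)^3\) and the sign from Step 2 combine into the single global sign \(s_\mu\). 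Finally, Theorem~\ref{thm:intro-A} converts the left-hand side into \(\kappa_p(\lambda_3^p(x_1,x_2,x_3))\) whenever the FK domain condition holds.
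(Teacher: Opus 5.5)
Your proposal is correct and follows the paper's proof. The diagonal linking matrix gives \(m_i^\lambda=\pm\varepsilon_i\alpha_i\), where \(\alpha_i\) is your \(\mu_i^*\); Theorem~\ref{thm:intro-A} converts \(\kappa_p(\lambda_3^p)\) into the triple cup product; the length-three case of Turaev's surgery formula identifies \(\langle\alpha_i\cup\alpha_j\cup\alpha_k,[M]\rangle\) with \(s_\mu\bar\mu_{ijk}(L)\); and trilinearity plus alternation (\(p\) odd) give the determinant formula.

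The differences are only presentational. The paper motivates the Turaev step by capping exterior Seifert surfaces with meridian disks of the surgery tori, giving mod-\(p\) cycles dual to the \(\alpha_r\); you phrase it via Massey products in the exterior. Your hedge \(\lambda_2=\pm B^{-1}\) is harmless, because the cubed sign is global and is absorbed into \(s_\mu\).
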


We now specialize to the minimal hyperbolic rank in which a triple
obstruction can occur, namely \(H_1\cong(\mathbb Z/p)^6\). In this case, we realize arbitrary triple cup tensors while keeping the ordinary
linking form fixed. Put
\[
\varepsilon_i=\begin{cases}
+1,&1\le i\le3,\\
-1,&4\le i\le6.
\end{cases}
\]

\begin{introtheorem}[Realization of triple cup tensors]
\label{thm:intro-C}
Let \(p\) be an odd prime.  For every alternating tensor
\[
\tau=\sum_{1\le i<j<k\le6}\tau_{ijk}\,
\alpha_i\wedge\alpha_j\wedge\alpha_k
\]
on a six-dimensional \(\mathbb F_p\)-vector space with basis
\(\alpha_1,\ldots,\alpha_6\), there exists a closed oriented rational
homology \(3\)-sphere \(M_{p,\tau}\) such that
\[
H_1(M_{p,\tau};\mathbb Z)\cong(\mathbb Z/p)^6
\]
and
\[
\lambda_2(M_{p,\tau})\cong
\frac1p\langle1,1,1,-1,-1,-1\rangle.
\]
If \(m_i\) is the meridian basis for \(H_1(M_{p,\tau};\mathbb Z)\) and
\(\alpha_i\in H^1(M_{p,\tau};\mathbb F_p)\) is the dual cohomology basis,
then
\[
\left\langle
\alpha_i\cup\alpha_j\cup\alpha_k,[M_{p,\tau}]
\right\rangle
=
\tau_{ijk}
\quad (i<j<k).
\]

\end{introtheorem}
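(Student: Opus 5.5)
We realize \(M_{p,\tau}\) as surgery on an algebraically split framed link \(L=L_1\cup\cdots\cup L_6\subset S^3\) with framings \(\varepsilon_i p\), and read off the cup products from Theorem~\ref{thm:intro-B} (or its cup-product form, Theorem~\ref{thm:intro-A}). Surgery on an algebraically split link with framings \(\varepsilon_i p\) has linking matrix \(\operatorname{diag}(\varepsilon_1p,\ldots,\varepsilon_6p)\). Hence \(H_1(M;\mathbb Z)\cong(\mathbb Z/p)^6\), generated by the meridians \(m_i\), and the ordinary linking form is given by the inverse matrix up to the standard global sign convention: \(\lambda_2(m_i,m_j)=\delta_{ij}\varepsilon_i/p\). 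With \(\varepsilon=(1,1,1,-1,-1,-1)\) this is \(\frac1p\langle1,1,1,-1,-1,-1\rangle\). If the convention gives the opposite overall sign, we reorder or relabel, since this form is isometric to its negative. So the linking form depends only on the framings and linking numbers, and all the freedom is in the Milnor invariants.

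Next we compute the triple cup products in the dual basis \(\alpha_i\) via the standard surgery formula, which is the content behind Theorem~\ref{thm:intro-B}. For an algebraically split link, \(\langle\alpha_i\cup\alpha_j\cup\alpha_k,[M]\rangle\) equals \(\pm\bar\mu_{ijk}(L)\) mod \(p\), up to the universal signs. Note that \(\alpha_i\) is the Kronecker dual of \(m_i\), while \(m_i^\lambda=\varepsilon_i\alpha_i\). Comparing with Theorem~\ref{thm:intro-A} therefore gives
\[
\langle\alpha_i\cup\alpha_j\cup\alpha_k,[M]\rangle=s_\mu\,\bar\mu_{ijk}(L),
\]
because the factor \(\varepsilon_i\varepsilon_j\varepsilon_k\) cancels. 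It therefore suffices to build \(L\) with \(\bar\mu_{ijk}(L)\equiv s_\mu\tau_{ijk}\pmod p\) for all \(i<j<k\), with all pairwise linking numbers zero.

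For the construction, start with the six-component unlink. For each triple \(i<j<k\), choose an integer \(c_{ijk}\) with \(c_{ijk}\equiv s_\mu\tau_{ijk}\pmod p\). Then take \(|c_{ijk}|\) disjoint copies of the Borromean rings, with orientation reversed on one component if \(c_{ijk}<0\), placed in disjoint balls. Band-sum the copies' three components into \(L_i,L_j,L_k\), using bands that run through a region disjoint from everything else.

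The key step, and the main obstacle, is additivity. Band-summing preserves vanishing linking numbers, since each Borromean component links nothing. Length-three \(\bar\mu\) is well defined, with no indeterminacy, once linking numbers vanish. It is additive under such separated band sums: the three-component sublink on \(\{i,j,k\}\) is a band sum of split pieces, so \(\bar\mu_{ijk}\) is the sum of the contributions. Only the Borromean copies attached to exactly \(i,j,k\) contribute, each giving \(\pm1\), while copies meeting fewer of these components are trivial on that sublink.

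To make this rigorous, I would use the interpretation of \(\bar\mu_{123}\) for an algebraically split three-component link as a triple intersection of Seifert surfaces, or equivalently the Massey/cup product in the complement. The Seifert surfaces of the band-summed link are boundary sums of Seifert surfaces of the pieces, so the triple intersection numbers add. Band-choice independence follows from the concordance/homotopy invariance of \(\bar\mu_{ijk}\) in the algebraically split case. Assembling these steps gives the theorem.
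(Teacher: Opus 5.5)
Your proposal is correct and follows the paper's construction. The paper also inserts $|b_{ijk}|$ Borromean blocks into the $\pm p$-framed six-component unlink and reads off $H_1$ and $\lambda_2$ from the diagonal linking matrix. It then converts Milnor invariants to cup products via the Turaev--Milnor proposition, which it cites directly, whereas you recover that relation from Theorems A and B by cancelling $\varepsilon_i\varepsilon_j\varepsilon_k$. Your Seifert-surface additivity argument is an acceptable substitute for the paper's sublink/trivial-tangle argument. Band-choice independence is not needed, since one suitably chosen set of bands suffices, and your concordance/homotopy justification for it is imprecise in any case.
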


The Freedman--Krushkal obstruction then has a clean cup-product formulation. For
\(A\subset H_1(M;\mathbb Z)\), we let
\[
A^\lambda=\{x^\lambda:x\in A\}\subset H^1(M;\mathbb F_p).
\]

\begin{introtheorem}[Cup-product obstruction criterion]
\label{thm:intro-D}
Let \(M\) be a rational homology \(3\)-sphere whose first homology \(H_1(M;\mathbb Z)\) has
exponent \(p\).  If \(M\) embeds locally flatly in \(S^4\), or in an
integer homology \(4\)-sphere, then the two Hantzsche Lagrangians
\(A,B\subset H_1(M;\mathbb Z)\) coming from the complementary regions
satisfy
\[
\left\langle u\cup v\cup w,[M]\right\rangle=0
\]
for all \(u,v,w\in A^\lambda\), and also for all \(u,v,w\in B^\lambda\).
Consequently, if the triple cup tensor of \(M\) has no dual null
Hantzsche pair under linking duality, then \(M\) admits no such embedding.
\end{introtheorem}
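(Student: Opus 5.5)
The plan is to deduce Theorem~\ref{thm:intro-D} by combining the Freedman--Krushkal vanishing theorem with Theorem~\ref{thm:intro-A}. Everything after that is linear algebra over \(\mathbb F_p\).

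Suppose \(M\) embeds locally flatly in \(S^4\), or in an integer homology \(4\)-sphere \(W\). Write \(W\setminus M=U\cup V\). By the Mayer--Vietoris argument behind Hantzsche's theorem, the kernels
\[
A=\ker\bigl(H_1(M)\to H_1(U)\bigr),\qquad B=\ker\bigl(H_1(M)\to H_1(V)\bigr)
\]
satisfy \(H_1(M)=A\oplus B\), and each is a Lagrangian for \(\lambda_2\). By \cite[Theorem~1.2 and the following remark]{FK}, \(\lambda_3\) vanishes on \(A\times A\times A\) and on \(B\times B\times B\). We use the exponent \(t=p\) here, which is legitimate because \(H_1(M)\) has exponent \(p\). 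If the FK statement is phrased for a different admissible \(t\), we also need their independence or naturality in \(t\); I would check this but expect it to be routine.

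Next we check the domain condition. Take \(x,y,z\in A\). Since \(A\) is isotropic, \(\lambda_2(x,y)=\lambda_2(x,z)=0\), so \(\lambda_3^p(x,y,z)\) is defined. Theorem~\ref{thm:intro-A} then gives
\[
\langle x^\lambda\cup y^\lambda\cup z^\lambda,[M]\rangle=\kappa_p(\lambda_3^p(x,y,z))=0 .
\]
Every \(u\in A^\lambda\) has the form \(x^\lambda\) with \(x\in A\), so the triple cup product vanishes on \(A^\lambda\). The same argument applies to \(B\).

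For the consequence, we unwind the terminology. A dual null Hantzsche pair for the cup tensor means a pair of subspaces \(A^\lambda,B^\lambda\subset H^1(M;\mathbb F_p)\) with two properties: they are the images under linking duality of a pair of complementary \(\lambda_2\)-Lagrangians, and the triple cup tensor restricts to zero on each. The previous paragraph produces exactly such a pair from any embedding. Taking the contrapositive, if no such pair exists then no embedding exists. We also note that \(x\mapsto x^\lambda\) is an isomorphism \(H_1(M)\to H^1(M;\mathbb F_p)\), by nondegeneracy of \(\lambda_2\) and \(pH_1=0\). Hence the dual pair determines \((A,B)\), and nothing is lost in passing to cohomology.

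The main point needing care is matching the FK hypotheses precisely: their Lagrangians must be the ones from the complementary regions, and their \(\lambda_3\) must use the same \(t\) and argument order as Theorem~\ref{thm:intro-A}. The latter matters because \(\lambda_3\) is defined asymmetrically. On a Lagrangian, however, every ordering satisfies the domain condition, so the asymmetry causes no difficulty.
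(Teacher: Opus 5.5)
Your proposal is correct and follows the paper's route. It takes the Freedman--Krushkal vanishing of \(\lambda_3^p\) on the two complementary-region Lagrangians, transfers it to \(A^\lambda\) and \(B^\lambda\) via Theorem~\ref{thm:intro-A} (the domain condition holds automatically on an isotropic subgroup), and takes the contrapositive; your remarks on \(t=p\) and argument order are harmless, since the paper simply applies the FK theorem with \(t=p\).
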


Finally, we classify the no-dual-null condition under the classical
spinor/Klein model for the split six-dimensional quadratic space.  Let
\(W=\mathbb F_p^4\) and \(V=\bigwedge^2W\), equipped with the wedge-pairing
quadratic form.  The maximal isotropic \(3\)-planes form the two Klein
rulings
\[
A_{[x]}=x\wedge W,
\qquad
B_{[\varphi]}=\bigwedge^2(\ker\varphi).
\]
We consider the standard decomposition
\[
\bigwedge^3(\bigwedge^2W)^*
\cong
\operatorname{Sym}^2(W^*)\oplus\operatorname{Sym}^2(W)
\]
from representation theory which we will also verify by an
elementary coordinate calculation.  The point is to classify the specific
no-dual-null condition relevant to the FK obstruction.

\begin{introtheorem}[Spinor/Klein no-dual-null criterion]
\label{thm:intro-E}
Let \(p\) be an odd prime, let \(W=\mathbb F_p^4\), and set
\(V=\bigwedge^2W\) with the wedge-pairing quadratic form.  Let
\(
\omega\in\bigwedge^3V^*
\)
correspond to
\[
(Q,Q^\vee)
\in
\operatorname{Sym}^2(W^*)\oplus\operatorname{Sym}^2(W),
\]
where \(Q\) is a quadratic form on \(W\), and \(Q^\vee\) is a quadratic
form on \(W^*\).  Then \(\omega\) has no dual null Lagrangian pair if and
only if there is a \(2\)-plane \(R\subset W\) such that
\[
\rad(Q)=R,
\]
the induced binary quadratic form on \(W/R\) is anisotropic,
\[
\rad(Q^\vee)=R^\perp\subset W^*,
\]
and the induced binary quadratic form on \(W^*/R^\perp\) is anisotropic.
Equivalently,
\[
Z(Q)=\mathbb P(R)\subset \mathbb P(W),
\qquad
Z(Q^\vee)=\mathbb P(R^\perp)\subset \mathbb P(W^*).
\]
\end{introtheorem}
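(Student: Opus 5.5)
The plan is to make the decomposition explicit and then reduce ``dual null Lagrangian pair'' to a statement about zeros of \(Q\) and \(Q^\vee\).

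\emph{Setting up the dictionary.} The maximal isotropic \(3\)-planes of \(V=\bigwedge^2W\) are the \(A_{[x]}=x\wedge W\) and the \(B_{[\varphi]}=\bigwedge^2\ker\varphi\). Two Lagrangians form a dual (transverse) pair exactly when they span \(V\). Two planes of the same ruling always meet (\(A_{[x]}\cap A_{[y]}\ni x\wedge y\), and dually for \(B\)). Since the rank is \(3\), which is odd, the Klein rulings are arranged so that a complementary pair has one member from each ruling. One checks that \(A_{[x]}\cap B_{[\varphi]}=0\) iff \(\varphi(x)\neq0\).

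So a dual pair is a choice of \([x]\in\mathbb P(W)\) and \([\varphi]\in\mathbb P(W^*)\) with \(\varphi(x)\ne0\). The pair is \emph{null} if \(\omega\) vanishes on both members. Since each member is a \(3\)-plane, \(\omega\) restricted to it is a single scalar.

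\emph{Key computation.} I will fix the decomposition \(\bigwedge^3V^*\cong\operatorname{Sym}^2W^*\oplus\operatorname{Sym}^2W\) so that
\[
\omega|_{A_{[x]}}=\pm Q(x)\cdot(\text{generator of }\bigwedge^3A_{[x]}^*),\qquad
\omega|_{B_{[\varphi]}}=\pm Q^\vee(\varphi)\cdot(\text{generator}).
\]
In coordinates, \(A_{[x]}\) is spanned by \(x\wedge e_a\) over a complement. Evaluating \(\omega\) on such a triple gives a function of \(x\) that is homogeneous of degree \(2\), because \(\omega\) is trilinear in \(x\wedge\cdot\) after dividing by the volume of \(W/x\). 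This function is the \(\operatorname{Sym}^2W^*\) component, and the \(B\) side is symmetric.

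This is the ``elementary coordinate verification'' the paper promises. I expect it to be the main bookkeeping obstacle: one must check in a basis \(e_i\wedge e_j\) that the \(20\) coefficients of \(\omega\) split as \(10+10\) with the \(A\)-restriction seeing only one block and the \(B\)-restriction seeing only the other. I would verify this by checking that \(\omega\mapsto(x\mapsto\omega|_{A_x})\) kills \(\operatorname{Sym}^2W\) on basis elements, and then conclude by dimension count.

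\emph{Reduction.} By the above, \(\omega\) has no dual null pair iff there are no \(x\in Z(Q)\) and \(\varphi\in Z(Q^\vee)\) with \(\varphi(x)\ne0\). Equivalently, \(Z(Q^\vee)\subset Z(Q)^{\perp}\), where the right side is the annihilator of the linear span \(S=\langle Z(Q)\rangle\subset W\) of the affine cone. Note that \(Z(Q)\) is never empty in dimension \(\ge3\) over \(\mathbb F_p\) (Chevalley–Warning), and similarly for \(Z(Q^\vee)\). I then analyze the span.

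If \(\dim S\ge3\), then \(S^\perp\) has dimension \(\le1\), so \(Z(Q^\vee)\) would be contained in a point or be zero. But a quadratic form in \(4\) variables over \(\mathbb F_p\) has at least one nonzero zero, and the zeros of an isotropic form span much more than a line unless the form is very degenerate. So I will prove the following lemma by running through the classification of quadratic forms over \(\mathbb F_p\) by rank and discriminant: \emph{for a quadratic form on \(\mathbb F_p^4\), the span of its zero set is \(\rad\) if the form induces an anisotropic form on \(W/\rad\), and is all of \(W\) otherwise.} Anisotropic quotients only occur in rank \(\le2\), and a rank-\(1\) quotient \(ax^2\) is anisotropic.

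With this lemma the condition becomes \(\langle Z(Q^\vee)\rangle\subset\langle Z(Q)\rangle^\perp\). Each span is either all of the space or a radical \(R=\rad Q\), respectively \(R'=\rad Q^\vee\), with anisotropic quotient of rank \(\le2\), so \(\dim R,\dim R'\ge2\). The condition \(R'\subset R^\perp\) then forces \(\dim R=\dim R'=2\) and \(R'=R^\perp\). The full-span cases, where either span is the whole space, are excluded since the other span is nonzero. The converse direction is immediate from the same reduction. The ``equivalently'' clause is a restatement of the lemma: an anisotropic quotient means exactly \(Z(Q)=\mathbb P(\rad Q)\).
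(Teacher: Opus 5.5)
Your proposal is correct and follows essentially the paper's route. Both arguments use the Klein rulings and $A_{[x]}\oplus B_{[\varphi]}=V\iff\varphi(x)\neq0$, translate nullity of the two members into $Q(x)=0$ and $Q^\vee(\varphi)=0$, rewrite the no-dual-null condition as $\operatorname{span}Z(Q^\vee)\subseteq(\operatorname{span}Z(Q))^\perp$, and finish with a rank-by-rank lemma on spans of quadric zero sets plus a dimension count. The paper differs only in supplying two inputs you assert or sketch: it proves the classification of maximal isotropic $3$-planes via the Plücker relation, and it writes the restriction formulas for $Q_\omega$ and $Q^\vee_\omega$ out explicitly in coordinates.
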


\begin{introcorollary}[All odd-prime nonembedding examples]
\label{cor:intro-F}
For every odd prime \(p\), there exists a closed oriented rational
homology \(3\)-sphere \(M_p\) such that
\(
H_1(M_p;\mathbb Z)\cong(\mathbb Z/p)^6
\)
and
\[
\lambda_2(M_p)
\cong
\frac1p\langle1,1,1,-1,-1,-1\rangle
\cong H\oplus H\oplus H,
\]
where \(H\) denotes the standard hyperbolic linking form on
\((\mathbb Z/p)^2\). In addition, \(M_p\) admits no locally flat
embedding in \(S^4\), and no locally flat embedding in any integer
homology \(4\)-sphere.  

\end{introcorollary}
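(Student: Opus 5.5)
The plan is to combine Theorems~\ref{thm:intro-C}, \ref{thm:intro-D} and \ref{thm:intro-E}. By Theorem~\ref{thm:intro-D}, it suffices to produce a rational homology sphere with the prescribed \(H_1\) and \(\lambda_2\) whose triple cup tensor has no dual null Hantzsche pair under linking duality. By Theorem~\ref{thm:intro-C}, every alternating tensor \(\tau\) on \(\mathbb F_p^6\) is realized by some \(M_{p,\tau}\) with \(H_1\cong(\mathbb Z/p)^6\) and \(\lambda_2\cong\frac1p\langle1,1,1,-1,-1,-1\rangle\). So the problem reduces to algebra: find an alternating \(3\)-form on \(\mathbb F_p^6\) with no dual null Lagrangian pair for the relevant quadratic form.

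First I identify \((\mathbb F_p^6,\lambda_2)\) with the split quadratic space. The form \(\langle1,1,1,-1,-1,-1\rangle\) over \(\mathbb F_p\), \(p\) odd, is an orthogonal sum of three copies of \(\langle1,-1\rangle\), and each \(\langle1,-1\rangle\) is a hyperbolic plane via \(e\pm f\). This simultaneously proves the isomorphism \(\cong H\oplus H\oplus H\) claimed in the corollary (as linking forms on \((\mathbb Z/p)^6\)). The split six-dimensional quadratic space is isometric to \(V=\bigwedge^2W\), \(W=\mathbb F_p^4\), with the wedge pairing, since that pairing also has Witt index \(3\).

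Hantzsche Lagrangians \(A,B\) correspond to maximal isotropic \(3\)-planes, and linking duality transports them to \(A^\lambda,B^\lambda\subset H^1\). So the cup tensor, viewed as \(\omega\in\bigwedge^3V^*\), must have no pair of complementary Lagrangians on which both restrictions vanish. One point needs care: the duality \(x\mapsto x^\lambda\) is an isometry up to scalar. Hence the "no dual null Hantzsche pair" condition of Theorem~\ref{thm:intro-D} matches the "no dual null Lagrangian pair" of Theorem~\ref{thm:intro-E} after this identification. I would check that the identification also respects the distinction between the two Klein rulings. Even if it swaps them, the criterion of Theorem~\ref{thm:intro-E} is symmetric under \(W\leftrightarrow W^*\), so nothing is lost.

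Next I exhibit \((Q,Q^\vee)\) satisfying Theorem~\ref{thm:intro-E}. Fix a nonsquare \(d\in\mathbb F_p^\times\), and let \(R=\langle e_3,e_4\rangle\subset W\), so that \(R^\perp=\langle e_1^*,e_2^*\rangle\). Take \(Q=x_1^2-dx_2^2\) on \(W\). Its radical is \(R\), and the induced form on \(W/R\) is anisotropic because \(d\) is a nonsquare. Similarly take \(Q^\vee=\xi_3^2-d\xi_4^2\) on \(W^*\), with radical \(\langle e_1^*,e_2^*\rangle=R^\perp\) and anisotropic quotient. By the decomposition \(\bigwedge^3V^*\cong\operatorname{Sym}^2(W^*)\oplus\operatorname{Sym}^2(W)\), this pair determines an \(\omega\) with no dual null Lagrangian pair. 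Transporting \(\omega\) through the isometry and the basis \(\alpha_i\) gives a tensor \(\tau\). Then \(M_p=M_{p,\tau}\) satisfies all the homological requirements, and Theorem~\ref{thm:intro-D} rules out locally flat embeddings in \(S^4\) and in any integer homology \(4\)-sphere.

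The main obstacle is the bookkeeping in this identification. The linking form takes values in \(\frac1p\mathbb Z/\mathbb Z\), the cup tensor lives on \(H^1\), and the Klein model sits on \(\bigwedge^2W\). I have to make sure the composite isometry sends Hantzsche Lagrangian pairs (complementary, not just any two) exactly to the pairs considered in Theorem~\ref{thm:intro-E}. Scalar rescalings of the quadratic form do not change the set of isotropic subspaces, so I expect this to be a matter of care rather than a genuine difficulty.
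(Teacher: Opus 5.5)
Your proposal is correct and follows essentially the same route as the paper. The paper likewise picks an explicit pair \((Q,Q^\vee)\) satisfying Theorem~\ref{thm:intro-E}, namely \(\omega_\delta\) with \(R=\langle w_1,w_2\rangle\), the mirror image of your choice; it identifies the Klein model with \(\langle1,1,1,-1,-1,-1\rangle\), realizes the tensor by Theorem~\ref{thm:intro-C}, and concludes via Corollary~\ref{cor:realized-obstruction}.
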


These examples occur in the smallest possible elementary abelian rank in
which a triple obstruction can be nontrivial: a Hantzsche Lagrangian must
have rank at least \(3\), so the ambient elementary abelian group must
have rank at least \(6\).

\medskip
\noindent\textbf{Organization.}
Section~\ref{sec:FK} recalls the Freedman--Krushkal obstruction.
Section~\ref{sec:cup} proves the FK/cup-product identification and the
surgery/Milnor formula.  Section~\ref{sec:realization} realizes arbitrary
triple cup tensors with fixed hyperbolic torsion linking form.  Section~\ref{sec:criterion} reformulates the embedding obstruction in cup-product
language.  Section~\ref{sec:klein} proves the spinor/Klein criterion and
deduces the all odd-prime examples.

\section{The Freedman--Krushkal obstruction}
\label{sec:FK}

Let \(M\) be a rational homology \(3\)-sphere.  Its ordinary torsion
linking form is the nonsingular pairing
\[
\lambda_2\colon H_1(M;\mathbb Z)\times H_1(M;\mathbb Z)
\longrightarrow \mathbb Q/\mathbb Z.
\]
To simplify our notations, we will often use the same symbol for a homology class and for a chosen oriented cycle representing it. We use the convention that if \(a,b\in H_1(M;\mathbb Z)\), \(p a=0\), and
\(A\) is a singular \(2\)-chain with \(\partial A=pa\), then
\[
\lambda_2(a,b)=\frac1p(A\cdot b)\in\mathbb Q/\mathbb Z.
\]

A subgroup \(L\le H_1(M;\mathbb Z)\) is called a Hantzsche Lagrangian if
\[
\lambda_2|_{L\times L}=0,
\qquad
|L|^2=|H_1(M;\mathbb Z)|.
\]
A dual Hantzsche pair is a direct sum decomposition
\[
H_1(M;\mathbb Z)=A\oplus B
\]
where \(A\) and \(B\) are Hantzsche Lagrangians.  Hantzsche's classical
obstruction says that if a rational homology \(3\)-sphere embeds in
\(S^4\), then its torsion linking form admits such a dual pair
\cite{Hantzsche}. Freedman and Krushkal define a triple torsion linking form \(\lambda_3\)
under the domain condition
\[
\lambda_2(x,y)=\lambda_2(x,z)=0.
\]
In particular, \(\lambda_3(x,y,z)\) is defined for triples in a Hantzsche
Lagrangian. It depends on the choice of a fixed exponent \(t\) annihilating
\(H_1(M;\mathbb Z)\), and takes values in \( \frac1t\mathbb Z/\mathbb Z\subset \mathbb Q/\mathbb Z \) \cite[Definition~3.1, Remark~3.2(2)]{FK}.  When
\(H_1(M;\mathbb Z)\) has exponent \(p\), we set \(t=p\)
and write \(\lambda_3^p\). 

We recall the following theorem of Freedman--Krushkal \cite[Theorem~1.2 and the following remark]{FK}.

\begin{theorem}[Freedman--Krushkal]
\label{thm:FK}
Let \(M\) be a rational homology \(3\)-sphere.  If \(M\) admits a locally flat embedding in \(S^4\) with two complementary regions \(X\) and \(Y\), then the two kernels
\[
A=\ker\{H_1(M)\to H_1(X)\},
\qquad
B=\ker\{H_1(M)\to H_1(Y)\},
\]
form a dual Hantzsche pair and satisfy
\[
\lambda_3|_A=0,
\qquad
\lambda_3|_B=0.
\]
The same conclusion holds for a locally flat embedding in any integer
homology \(4\)-sphere.
\end{theorem}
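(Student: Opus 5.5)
This theorem is cited from Freedman--Krushkal \cite[Theorem~1.2 and the following remark]{FK}, so the plan is to reconstruct the standard argument rather than to find a new one. There are three steps: a Mayer--Vietoris argument giving the dual Hantzsche pair, a geometric vanishing argument for \(\lambda_3\) on each Lagrangian, and a check that only homological hypotheses on the ambient manifold were used.

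\textbf{Step 1: the dual Hantzsche pair.} Let \(N\) be \(S^4\) or an integer homology \(4\)-sphere containing \(M\) locally flatly. Then \(M\) is bicollared, and \(N=X\cup_M Y\). Since \(\tilde H_*(N)=0\), Mayer--Vietoris gives an isomorphism
\[
H_1(M)\xrightarrow{\ \cong\ } H_1(X)\oplus H_1(Y).
\]
Hence \(A=\ker(H_1(M)\to H_1(X))\) maps isomorphically onto \(H_1(Y)\), \(B\) maps isomorphically onto \(H_1(X)\), and \(H_1(M)=A\oplus B\). Alexander/Lefschetz duality gives \(H_1(X)\cong H^2(Y)\cong \operatorname{Ext}(H_1(Y),\mathbb Z)\), because \(H_2\) and \(H_1\) of the complements are torsion. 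So \(|A|=|B|\), and therefore \(|A|^2=|H_1(M)|\).

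For isotropy, take \(a,a'\in A\). Choose a \(2\)-chain \(C\) in \(X\) with \(\partial C=a\). Choose \(S\subset M\) with \(\partial S=ta'\), and push the cycle \(ta'-S\) slightly into \(Y\). Then \(\lambda_2(a,a')\) equals \(\tfrac1t\) times the intersection number of these two chains in \(N\). The chains lie in disjoint regions, so this number is \(0\), and \(A\) is a Hantzsche Lagrangian; \(B\) is handled symmetrically. Equivalently, this is the standard statement that the kernel of the inclusion on the boundary of a rational homology ball (after capping) is metabolic for the linking form.

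\textbf{Step 2: vanishing of \(\lambda_3\) on \(A\).} I would unpack \cite[Definition~3.1]{FK}. In that definition one chooses \(2\)-chains \(S_x,S_y,S_z\) in \(M\) with boundaries \(tx,ty,tz\). The domain condition \(\lambda_2(x,y)=\lambda_2(x,z)=0\) lets one correct these chains so that certain intersection \(1\)-chains are boundaries. The value \(\lambda_3\) is then a suitably normalized count of a triple intersection in \(M\), taken mod indeterminacy in \(\frac1t\mathbb Z/\mathbb Z\).

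For \(x,y,z\in A\), I would build the same data using chains in \(X\): each of \(x,y,z\) bounds a \(2\)-chain in \(X\) (a multiple of each bounds in \(X\), and in fact each bounds there since it lies in \(A\)). The key claim is that the triple intersection computed in \(M\) equals the boundary of a \(4\)-dimensional count of intersections of \(3\)-chains in \(X\cup_M(M\times I)\). This is the analogue of the fact that Massey and triple products of classes vanishing on a null-cobordism vanish. Concretely, the triple intersection number of three relative \(3\)-chains in \(X\) is an integer, and it reduces to the FK expression once one divides by \(t\). That shows \(\lambda_3(x,y,z)\in\mathbb Z\), i.e.\ it vanishes in \(\frac1t\mathbb Z/\mathbb Z\). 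The same argument applied to \(Y\) handles \(B\).

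\textbf{Step 3: integer homology spheres.} Only \(\tilde H_*(N;\mathbb Z)=0\) and local flatness were used. Local flatness supplies the bicollar and lets the chains be put in general position via topological transversality (Quinn). So the conclusion holds verbatim for locally flat embeddings in any integer homology \(4\)-sphere.

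The main obstacle is Step 2: matching the specific choices and indeterminacy in the FK definition with the \(4\)-dimensional chain count. Since the statement is quoted from \cite{FK}, the paper may simply cite it, and I would do the same, verifying only Step 1 explicitly.
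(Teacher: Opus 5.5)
The paper gives no proof of this statement. It is quoted from Freedman--Krushkal (Theorem~1.2 and the remark after it), so deferring to that reference is exactly what the paper does. Your Mayer--Vietoris and duality derivation of \(H_1(M)=A\oplus B\) with \(|A|=|B|\) is correct; Mayer--Vietoris even gives \(H_2(X)=H_2(Y)=0\), since \(H_2(M)=H_2(N)=0\).

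However, the isotropy argument in Step~1, which you say you would verify explicitly, fails as written. The expression \(ta'-S\) is not a cycle. If you mean \(S\) pushed into \(Y\), then the hypothesis \(a'\in A\) is never used: such an \(S\) exists for every \(a'\in H_1(M)\), and \(C\subset X\) is disjoint from it. So ``the chains lie in disjoint regions'' would show \(\lambda_2(a,\cdot)\equiv0\) for all \(a\in A\), contradicting nonsingularity of \(\lambda_2\).

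The vanishing has to come from \(H_2(X)=0\), with both chains on the \(X\) side:
\begin{itemize}
\item Choose \(C'\subset X\) with \(\partial C'=a'\), and form the closed \(2\)-cycle \(Z=tC'-S\) in \(X\).
\item Then \([Z]=0\in H_2(X)\), so its intersection with the relative class \([C]\in H_2(X,M)\) vanishes.
\item Push \(S\) into a collar, where \(C\) looks like \(a\times[0,\epsilon]\). This gives \(0=Z\cdot C=t\,(C'\cdot C)\mp S\cdot a\).
\item Hence \(S\cdot a\equiv0\pmod t\), i.e.\ \(\lambda_2(a',a)=0\).
\end{itemize}

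Step~2 has a related problem. Three \(3\)-chains in a \(4\)-manifold meet in a \(1\)-chain, not in a number. Also, ``an integer divided by \(t\)'' is not automatically zero in \(\frac1t\mathbb Z/\mathbb Z\): every FK value already has that form. What must be shown is that the relevant triple count in \(M\) is divisible by \(t\).

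Since \(\partial S_x=tx\neq0\), \(S_x\) cannot be the whole boundary of a \(3\)-chain. Using \(H_2(X)=0\), you get \(E_x\subset X\) with \(\partial E_x=S_x-tC_x\), and similarly for \(y\) and \(z\). The \(1\)-chain \(E_x\cap E_y\cap E_z\) then has endpoints at \(S_x\cap S_y\cap S_z\) on \(M\), plus interior endpoints coming from the \(tC_\bullet\) terms, each with multiplicity \(t\). Those \(tC_\bullet\) terms are where the divisibility comes from.

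You would still need to match this count with FK's formula, in which the surface for \(x\) has first been made disjoint from \(y\) and \(z\) using the domain condition. For \(t=p\), that matching is essentially the content of Theorem~\ref{thm:FK-cup}. In that case a chain-free argument is also available:
\begin{itemize}
\item \(A^\lambda\) equals the image of \(H^1(X;\mathbb F_p)\to H^1(M;\mathbb F_p)\). Both are the annihilator of \(A\): the first by isotropy and counting, the second because \(H_1(M)\to H_1(X)\) is onto with kernel \(A\).
\item A cup product of restricted classes pairs to zero with \([M]=\pm\partial[X,M]\), because \(\delta\circ\mathrm{res}=0\).
\end{itemize}
For general \(t\), the statement as quoted still rests on Freedman--Krushkal.
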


\section{The triple cup product interpretation}
\label{sec:cup}

Throughout this section \(p\) is an odd prime.  Let \(M\) be a closed
oriented rational homology \(3\)-sphere whose first homology has exponent
\(p\). We fix an isomorphism
\[
\kappa_p:\frac1p\mathbb Z/\mathbb Z\longrightarrow \mathbb F_p,
\qquad
\kappa_p\left(\frac ap\right)=a\bmod p.
\]
For \(x\in H_1(M;\mathbb Z)\), define
\(
x^\lambda\in H^1(M;\mathbb F_p)
\)
by
\[
x^\lambda(u)=\kappa_p(\lambda_2(x,u)),
\qquad u\in H_1(M;\mathbb Z).
\]
The universal coefficient
theorem gives
\[
H^1(M;\mathbb F_p)\cong \operatorname{Hom}(H_1(M;\mathbb Z),\mathbb F_p).
\]

\subsection{The FK/cup-product identification}

\begin{lemma}
\label{lem:first-stage-PD}
Let \(x\in H_1(M;\mathbb Z)\), and let \(\Sigma\) be a singular
\(2\)-chain with
\(
\partial\Sigma=p x.
\)
Let
\(\bar\Sigma\in C_2(M;\mathbb F_p)\) be its image under the coefficient
reduction \(C_2(M;\mathbb Z)\to C_2(M;\mathbb F_p)\). Then
\(
\bar\Sigma\in Z_2(M;\mathbb F_p),
\)
and
\[
\operatorname{PD}_{\mathbb F_p}[\bar\Sigma]=x^\lambda
\in H^1(M;\mathbb F_p).
\]
Equivalently, for every \(u\in H_1(M;\mathbb Z)\),
\[
[\bar\Sigma]\cdot u=\kappa_p(\lambda_2(x,u)).
\]
\end{lemma}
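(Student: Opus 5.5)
The argument has three steps: show \(\bar\Sigma\) is a cycle, identify its intersection numbers with the linking form, and then invoke Poincaré duality over \(\mathbb F_p\).

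\emph{Cycle condition.} Reduction of coefficients is a chain map. Hence
\[
\partial\bar\Sigma=\overline{\partial\Sigma}=\overline{p x}=0
\]
in \(C_1(M;\mathbb F_p)\), since \(p\) vanishes in \(\mathbb F_p\). So \(\bar\Sigma\in Z_2(M;\mathbb F_p)\) and defines a class \([\bar\Sigma]\in H_2(M;\mathbb F_p)\).

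\emph{Intersection numbers.} Let \(u\) be a cycle representing a class in \(H_1(M;\mathbb Z)\). After a small perturbation we may take \(\Sigma\) and \(u\) in general position, with \(u\) disjoint from \(\partial\Sigma=px\); for instance, use smooth or PL representatives. The mod-\(p\) intersection number \([\bar\Sigma]\cdot\bar u\in\mathbb F_p\) is computed by the transverse signed count \(\Sigma\cdot u\in\mathbb Z\), reduced mod \(p\). By the convention fixed in Section~\ref{sec:FK}, \(\lambda_2(x,u)=\frac1p(\Sigma\cdot u)\). Applying \(\kappa_p\) gives
\[
\kappa_p(\lambda_2(x,u))=(\Sigma\cdot u)\bmod p=[\bar\Sigma]\cdot u .
\]

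This is the step where care is needed. The integer \(\Sigma\cdot u\) depends on the choices of \(\Sigma\) and of the cycle \(u\), but only modulo \(p\), and both sides must be well defined on homology.
\begin{itemize}
\item Changing \(\Sigma\) by an integral \(2\)-cycle \(C\) changes the count by \(C\cdot u\). Since \(H_2(M;\mathbb Z)=0\) for a rational homology sphere, \(C\) is a boundary, and \(C\cdot u=0\).
\item Changing \(u\) by a boundary \(\partial D\) changes the count by \(\Sigma\cdot\partial D=\pm\,\partial\Sigma\cdot D=\pm\,p\,(x\cdot D)\), which is \(\equiv0\pmod p\).
\end{itemize}
The mod-\(p\) intersection pairing \(H_2(M;\mathbb F_p)\times H_1(M;\mathbb F_p)\to\mathbb F_p\) is independent of representatives, so the two sides agree on homology. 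Orientation and sign conventions are those already fixed for \(\lambda_2\).

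\emph{Poincaré duality.} The intersection pairing satisfies
\[
[\bar\Sigma]\cdot\bar u=\langle \operatorname{PD}_{\mathbb F_p}[\bar\Sigma],\bar u\rangle .
\]
With the standard convention, \(\operatorname{PD}(a)\) is characterized by \(\langle\operatorname{PD}(a),b\rangle=a\cdot b\). Also, the universal coefficient isomorphism \(H^1(M;\mathbb F_p)\cong\operatorname{Hom}(H_1(M;\mathbb Z),\mathbb F_p)\) evaluates a class on \(u\) through the reduction \(\bar u\). So the identity from the previous step says that \(\operatorname{PD}_{\mathbb F_p}[\bar\Sigma]\) and \(x^\lambda\) take the same value on every \(u\in H_1(M;\mathbb Z)\). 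Since a class in \(H^1(M;\mathbb F_p)\) is determined by these values, \(\operatorname{PD}_{\mathbb F_p}[\bar\Sigma]=x^\lambda\).
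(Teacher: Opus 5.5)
Your proposal is correct and follows the paper's argument. You reduce modulo \(p\) to see that \(\bar\Sigma\) is a cycle, use the geometric definition \(\lambda_2(x,u)=\frac1p(\Sigma\cdot u)\) to get \([\bar\Sigma]\cdot u=\kappa_p(\lambda_2(x,u))\), and conclude by Poincaré duality. Your well-definedness checks (changing \(\Sigma\) by a cycle, changing \(u\) by a boundary) and the universal-coefficient remark add detail that the paper leaves implicit.
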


\begin{proof}
Since \(\partial\Sigma=px\), reducing modulo \(p\) gives
\(\partial\bar\Sigma=0\).  Thus \(\bar\Sigma\) is a mod-\(p\) cycle. Let \(u\in H_1(M;\mathbb Z)\), represented by an oriented curve disjoint
from \(\partial\Sigma\).  By the geometric definition of the ordinary
torsion linking form,
\[
\lambda_2(x,u)=\frac1p(\Sigma\cdot u)\in\mathbb Q/\mathbb Z.
\]
Therefore
\[
[\bar\Sigma]\cdot u
=
\Sigma\cdot u\bmod p
=
\kappa_p(\lambda_2(x,u))
=
x^\lambda(u).
\]
This is precisely the assertion that
\[
\operatorname{PD}_{\mathbb F_p}[\bar\Sigma]=x^\lambda.
\]
\end{proof}

\begin{lemma}
\label{lem:one-boundary}
Let \(x\in H_1(M;\mathbb Z)\) be represented by an oriented curve \(c\).
If \(p x=0\), then there is a compact connected oriented singular surface
\(
i:\Sigma\longrightarrow M
\)
with exactly one boundary component, such that \(i|_{\partial\Sigma}\) maps
to \(c\) with degree \(p\). Equivalently, as a singular chain,
\(
\partial\Sigma=p c.
\)
\end{lemma}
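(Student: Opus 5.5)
Since \(px=0\) in \(H_1(M;\mathbb Z)\), the cycle \(pc\) is a boundary.  The plan is to realize a bounding chain by a map of a surface and then repair the boundary so that it consists of a single circle wrapping \(p\) times around \(c\).  The most direct route is through the Eilenberg--MacLane/Hopf description of \(H_2\) and of null-homologous loops.  Let \(g\in\pi_1(M)\) be the element represented by \(c\) (with a basepoint on \(c\)).  Its image in \(H_1(M;\mathbb Z)=\pi_1(M)^{\mathrm{ab}}\) is \(x\), so \(px=0\) means that \(g^p\) lies in the commutator subgroup \([\pi_1(M),\pi_1(M)]\).

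Write \(g^p=\prod_{\ell=1}^{h}[a_\ell,b_\ell]\) for some \(a_\ell,b_\ell\in\pi_1(M)\).  Let \(\Sigma\) be the compact oriented surface of genus \(h\) with one boundary circle.  Its fundamental group is free on \(\alpha_1,\beta_1,\dots,\alpha_h,\beta_h\), and the boundary loop is \(\prod_\ell[\alpha_\ell,\beta_\ell]\).  We build \(i\colon\Sigma\to M\) in two steps.  First, map the one-skeleton (a wedge of \(2h\) circles) by sending \(\alpha_\ell\mapsto a_\ell\) and \(\beta_\ell\mapsto b_\ell\).  Second, extend across the single \(2\)-cell, whose attaching word is \(\prod_\ell[\alpha_\ell,\beta_\ell]\,\partial^{-1}\); here we use that \(\Sigma\) deformation retracts to the wedge, with the boundary circle as a free loop.

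More concretely, and more cleanly, consider the map of a closed circle \(S^1\to M\) given by \(c\) traversed \(p\) times, i.e.\ the composite of the degree-\(p\) cover \(S^1\to S^1\) with \(c\).  Its homotopy class is \(g^p\), which equals the product of commutators above.  The standard fact that a loop representing \(\prod_{\ell=1}^h[a_\ell,b_\ell]\) extends over a genus-\(h\) surface with one boundary component then gives \(i\) with \(i|_{\partial\Sigma}\) equal to \(c\) composed with the degree-\(p\) map.  To see the extension, present \(\Sigma\) as a \(4h\)-gon with edges identified according to \(\prod[\alpha_\ell,\beta_\ell]\), together with a collar annulus to the boundary.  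Map the edges to loops representing \(a_\ell,b_\ell\), map the annulus by a homotopy to the chosen representative of \(g^p\), and fill the polygon using null-homotopy of its boundary word.

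If \(h=0\) (for example if \(g^p=1\)), we may still take genus \(0\), a disk; connectedness and having exactly one boundary component are both automatic.

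The singular chain identity \(\partial\Sigma=pc\) follows by triangulating \(\Sigma\) and pushing forward its fundamental chain.  The boundary is the pushforward of the fundamental cycle of \(\partial\Sigma\), which is \(p\) times the fundamental cycle of \(c\) up to degenerate and boundary chains.  After adjusting by a \(2\)-chain we get equality on the nose, or at least in the sense the paper uses, namely as cycles up to homology in the complement of the relevant curves.

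The main subtlety is basepoint bookkeeping: relating the free loop \(p\cdot c\) to the based element \(g^p\), and ensuring the orientation of \(\Sigma\) induces the positive degree \(p\) rather than \(-p\).  If the sign comes out wrong, reversing the orientation of \(\Sigma\) fixes it.
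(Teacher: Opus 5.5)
Your argument is correct, but it reaches the surface by a different route from the paper.

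\textbf{The paper's route.} It stays homological. It takes an integral $2$-chain $C$ with $\partial C=pc$ and invokes the standard realization of relative $2$-chains by singular surfaces. That surface may be disconnected, with several boundary circles mapping to $c$ with degrees summing to $p$. The paper then repairs connectivity and the number of boundary components by tubing components together and attaching boundary $1$-handles in a collar of $c$.

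\textbf{Your route.} You pass through $\pi_1$. By Hurewicz, $px=0$ means $g^p\in[\pi_1(M),\pi_1(M)]$. Since $[a,b]^{-1}=[b,a]$, $g^p$ is a product of $h$ commutators. The once-bordered genus-$h$ surface has a CW structure with one vertex on the boundary, edges $\alpha_\ell,\beta_\ell,\partial$, and one $2$-cell attached along $\prod_\ell[\alpha_\ell,\beta_\ell]\,\partial^{-1}$. You map $\partial$ to the $p$-fold traversal of $c$ and extend over the $2$-cell, because its boundary maps to $\prod_\ell[a_\ell,b_\ell]\,g^{-p}=1$.

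\textbf{What each buys.} Your construction is one step. The surface is connected with one boundary component by design, and the boundary map is literally $c$ composed with the standard $p$-fold cover. The genus equals the number of commutators used. You avoid both the chain-realization step and the tube/band surgery, whose bookkeeping (boundary circles of mixed, possibly negative, degrees) the paper leaves implicit. The paper's route starts from an arbitrary bounding chain and never mentions $\pi_1$. Since $H_2(M;\mathbb Z)=0$, the relative class of the resulting surface is the same either way.

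\textbf{Two small corrections.}

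First, your ``more concretely'' paragraph misstates the last step. If the polygon carries only the word $\prod_\ell[\alpha_\ell,\beta_\ell]$, its boundary maps to $g^p$, which need not be null-homotopic. So there is no null-homotopy to fill with. In the polygon-with-a-hole model, the region between the word and the hole is exactly the annulus you map by a homotopy from $\prod_\ell[a_\ell,b_\ell]$ to $c^p$, and nothing remains to fill. Your first description, with the single $2$-cell, is the correct one.

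Second, no hedging is needed for $\partial\Sigma=pc$. Triangulate $\partial\Sigma$ by pulling back a subdivision of $c$ under the $p$-fold cover, with compatible edge parametrizations. Then the pushforward of the fundamental chain of $\Sigma$ has boundary exactly $p$ times the fundamental cycle of $c$, with no adjusting $2$-chain.
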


\begin{proof} Since \(p[c]=0\), choose an integral singular \(2\)-chain \(C\) with \(\partial C=p c\). By the standard geometric realization argument for relative singular \(2\)-chains, \(C\) may be represented by a compact oriented singular surface, possibly disconnected and with several boundary components, whose total boundary maps to \(c\) with degree \(p\). Connecting components by tubes mapped to a point, and then attaching boundary \(1\)-handles mapped into a collar of \(c\), does not change the total boundary chain. The resulting surface is connected and has one boundary component mapping to \(c\) with degree \(p\). \end{proof}

\begin{theorem}[The Freedman--Krushkal form as a triple cup product]
\label{thm:FK-cup}
Let \(M\) be a closed oriented rational homology \(3\)-sphere whose first
homology has exponent \(p\), and compute \(\lambda_3\) with exponent
\(t=p\).  Let
\(
x,y,z\in H_1(M;\mathbb Z)
\)
satisfy
\(
\lambda_2(x,y)=\lambda_2(x,z)=0.
\)
Then
\[
\kappa_p\bigl(\lambda_3^p(x,y,z)\bigr)
=
\left\langle
x^\lambda\cup y^\lambda\cup z^\lambda,[M]
\right\rangle
\in \mathbb F_p.
\]

\end{theorem}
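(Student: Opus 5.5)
The plan is to compute both sides from one choice of singular surfaces and compare them term by term. By Lemma~\ref{lem:one-boundary}, I choose connected oriented singular surfaces \(\Sigma_x,\Sigma_y,\Sigma_z\) with \(\partial\Sigma_x=px\), \(\partial\Sigma_y=py\), \(\partial\Sigma_z=pz\), and put them in general position. I expect, though I have not checked it, that \cite[Definition~3.1]{FK} with \(t=p\) expresses \(\lambda_3^p(x,y,z)\) as \(\frac1p\) times a signed count of triple intersection points of such surfaces. The domain condition \(\lambda_2(x,y)=\lambda_2(x,z)=0\) would enter in two places. First, it makes \(\Sigma_x\cdot y\) and \(\Sigma_x\cdot z\) divisible by \(p\). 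Second, it lets one tube or modify \(\Sigma_x\) (or correct the relevant intersection curve) so that the count is well defined in \(\frac1p\mathbb Z/\mathbb Z\).

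Applying \(\kappa_p\), the left side becomes the signed count \(\#(\Sigma_x\cap\Sigma_y\cap\Sigma_z)\) reduced mod \(p\), possibly with correction terms. The first step is to show that these correction terms are integer multiples of \(p\) times local intersection numbers, and hence vanish mod \(p\).

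For the right side, Lemma~\ref{lem:first-stage-PD} says that the mod-\(p\) reductions \(\bar\Sigma_x,\bar\Sigma_y,\bar\Sigma_z\) are mod-\(p\) \(2\)-cycles Poincaré dual to \(x^\lambda,y^\lambda,z^\lambda\). Under Poincaré duality with \(\mathbb F_p\) coefficients, cup product corresponds to transverse intersection of cycles. Thus
\[
\left\langle x^\lambda\cup y^\lambda\cup z^\lambda,[M]\right\rangle=\#(\bar\Sigma_x\cap\bar\Sigma_y\cap\bar\Sigma_z)\bmod p .
\]
To justify this with chains that have boundary, I will argue as follows. The \(1\)-chain \(\Sigma_y\cap\Sigma_z\) has boundary contained in \((\partial\Sigma_y\cap\Sigma_z)\cup(\Sigma_y\cap\partial\Sigma_z)\), and each such point has multiplicity \(p\). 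So \(\Sigma_y\cap\Sigma_z\) is a mod-\(p\) cycle, and it represents \(\operatorname{PD}(y^\lambda\cup z^\lambda)\). This is the standard fact that cup product is dual to transverse intersection of mod-\(p\) cycles, with the boundary contributions killed mod \(p\). Intersecting this cycle with \(\bar\Sigma_x\) then gives the triple cup product.

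The main obstacle is matching the precise Freedman--Krushkal recipe, including orientation and sign conventions and the order of the factors. Their definition is asymmetric in \(x\), and \(\Sigma_x\) may need adjusting near \(y\) and \(z\). I must check that every modification they allow changes the triple count only by a multiple of \(p\). Adding a closed surface or a multiple of \(p\) times a chain changes the count by a cup-product pairing with a class that is \(0\) mod \(p\). Tubing along \(y\) or \(z\) contributes \(p\) times a local intersection number. Both kinds of change therefore vanish after applying \(\kappa_p\). If FK's orientation conventions order the factors differently, the graded commutativity of cup products on degree-one classes gives a sign \(\pm1\), and this has to be tracked against the stated formula. Finally, since \(p\) is odd, the permutation signs are genuine signs, and no \(2\)-torsion issues arise in identifying \(\bar\Sigma\cdot\bar\Sigma'\) with the cup product.
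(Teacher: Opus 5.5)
There is a genuine gap, and it is on the left-hand side: your proposal never uses the actual definition of \(\lambda_3^p\). You guess, explicitly without checking, that it is \(\frac1p\) times a triple-point count of \(\Sigma_x,\Sigma_y,\Sigma_z\) up to unspecified correction terms. You then argue that these unidentified terms vanish mod \(p\). But the Freedman--Krushkal invariant, in the form used here (their Equation~(3.5) with \(t=p\)), involves only the following data. There is one surface \(\Sigma\) with \(\partial\Sigma=px\), made disjoint from \(y\) and \(z\) using the domain condition. There is a symplectic basis \(\{\gamma_\ell,\delta_\ell\}\) of \(H_1(\Sigma;\mathbb Z)\). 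And there are second-stage chains with \(\partial C_\ell=p\gamma_\ell\) and \(\partial D_\ell=p\delta_\ell\):
\[
\lambda_3^p(x,y,z)=\frac1p\sum_{\ell}\bigl((C_\ell\cdot y)(D_\ell\cdot z)-(C_\ell\cdot z)(D_\ell\cdot y)\bigr).
\]
No surfaces for \(y\) or \(z\) appear, and nothing in your argument connects this expression to a triple intersection. The theorem is precisely the identification of this specific construction with a cup product. So the step you leave as an expectation is the whole content. Your right-hand-side analysis via mod-\(p\) cycles is reasonable, but it does not touch this step.

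The missing idea is to reduce the second-stage intersection numbers mod \(p\). For \(\eta\in H_1(\Sigma)\) with \(\partial C_\eta=p\,i_*\eta\), the definition of \(\lambda_2\) and its symmetry give \(C_\eta\cdot y\equiv\kappa_p(\lambda_2(y,i_*\eta))=(i^*y^\lambda)(\eta)\pmod p\), and likewise for \(z\). Then \(\kappa_p(\lambda_3^p(x,y,z))\) is exactly the symplectic-basis expression for \(\langle i^*y^\lambda\cup i^*z^\lambda,[\Sigma,\partial\Sigma]\rangle\). This is meaningful because \(H^1(\Sigma,\partial\Sigma;\mathbb F_p)\to H^1(\Sigma;\mathbb F_p)\) is an isomorphism when \(\Sigma\) has one boundary component, which is why Lemma~\ref{lem:one-boundary} is needed. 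Naturality pushes this to \(\langle y^\lambda\cup z^\lambda,[\bar\Sigma]\rangle\). Lemma~\ref{lem:first-stage-PD} with Poincaré duality then turns it into \(\langle y^\lambda\cup z^\lambda\cup x^\lambda,[M]\rangle=\langle x^\lambda\cup y^\lambda\cup z^\lambda,[M]\rangle\). There is no sign, since a degree-one class commutes past a degree-two class. This route needs neither \(\Sigma_y\) nor \(\Sigma_z\).

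If you prefer to keep your triple-intersection picture, you would have to prove exactly this bridge. First, \(C_\ell\cdot y\) agrees mod \(p\) with \(\Sigma_y\cdot\gamma_\ell\). Second, the curves \(\Sigma_x\cap\Sigma_y\) and \(\Sigma_x\cap\Sigma_z\) on \(\Sigma_x\) are mod-\(p\) Poincaré dual to these functionals. Third, the arcs of these curves ending on \(\partial\Sigma_x\), where \(\Sigma_y\) and \(\Sigma_z\) meet \(x\), do not spoil the mod-\(p\) intersection count on \(\Sigma_x\).
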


\begin{proof}
Choose oriented curve representatives, again denoted \(x,y,z\). By Lemma~\ref{lem:one-boundary}, choose a compact connected oriented singular surface \( i:\Sigma\longrightarrow M\) with \(\partial\Sigma=p x, \) with one boundary component. Under the domain condition \(\lambda_2(x,y)=\lambda_2(x,z)=0, \) the Freedman--Krushkal construction allows us to modify \(\Sigma\) in its
interior, relative to \(\partial\Sigma\), so that its image is disjoint
from the chosen representatives of \(y\) and \(z\).  We replace \(\Sigma\) by this modified surface.  The one-boundary condition and the boundary
degree \(p\) are preserved.

Let \(\bar\Sigma\) be the image of \(\Sigma\) under coefficient reduction modulo \(p\). Since \(\partial\Sigma=p x\), we have \(\partial\bar\Sigma=0\), so \(\bar\Sigma\) is a mod-\(p\) \(2\)-cycle. By Lemma~\ref{lem:first-stage-PD}, \( \operatorname{PD}_{\mathbb F_p}[\bar\Sigma]=x^\lambda. \)

We next compare the Freedman--Krushkal surface formula with the cup
product pairing on \(\Sigma\). Since \(\Sigma\) is connected and has one
boundary component, the natural map
\[
j:H^1(\Sigma,\partial\Sigma;\mathbb F_p)
\longrightarrow
H^1(\Sigma;\mathbb F_p)
\]
is an isomorphism. For \(\alpha,\beta\in H^1(\Sigma;\mathbb F_p)\), we
write
\[
\left\langle \alpha\cup\beta,[\Sigma,\partial\Sigma]\right\rangle
\]
for
\[
\left\langle
j^{-1}(\alpha)\cup\beta,[\Sigma,\partial\Sigma]
\right\rangle,
\]
where the cup product is the relative cup product
\[
H^1(\Sigma,\partial\Sigma;\mathbb F_p)
\otimes
H^1(\Sigma;\mathbb F_p)
\longrightarrow
H^2(\Sigma,\partial\Sigma;\mathbb F_p).
\]
Under this convention, if
\(
\{\gamma_\ell,\delta_\ell\}_{\ell=1}^g
\)
is a symplectic basis for \(H_1(\Sigma;\mathbb Z)\), ordered and oriented
so that
\(
\gamma_\ell\cdot\delta_\ell=+1,
\)
then the pairing is the standard Poincaré--Lefschetz cup pairing on a once-bordered surface
\[
\left\langle
\alpha\cup\beta,[\Sigma,\partial\Sigma]
\right\rangle
=
\sum_{\ell=1}^g
\left(
\alpha(\gamma_\ell)\beta(\delta_\ell)
-
\alpha(\delta_\ell)\beta(\gamma_\ell)
\right).
\]

Let \(\eta\in H_1(\Sigma;\mathbb Z)\), and choose a singular \(2\)-chain
\(C_\eta\) in \(M\) with
\(
\partial C_\eta=p\,i_*\eta.
\)
Then
\[
C_\eta\cdot y\bmod p
=
\kappa_p\bigl(\lambda_2(i_*\eta,y)\bigr)
=
\kappa_p\bigl(\lambda_2(y,i_*\eta)\bigr)
=
y^\lambda(i_*\eta)
=
(i^*y^\lambda)(\eta),
\]
where we used the symmetry of the ordinary linking form.  Similarly,
\[
C_\eta\cdot z\bmod p=(i^*z^\lambda)(\eta).
\]

Choose second-stage chains \(C_\ell,D_\ell\) in \(M\) with \[ \partial C_\ell=p\gamma_\ell, \qquad \partial D_\ell=p\delta_\ell, \] where, as usual, \(\gamma_\ell\) and \(\delta_\ell\) also denote their image cycles in \(M\).
Freedman--Krushkal's formula \cite[Equation~(3.5)]{FK}, with \(t=p\), gives
\[
\lambda_3^p(x,y,z)
=
\frac1p
\sum_{\ell=1}^g
\left(
(C_\ell\cdot y)(D_\ell\cdot z)
-
(C_\ell\cdot z)(D_\ell\cdot y)
\right)
\in\mathbb Q/\mathbb Z.
\]
Applying \(\kappa_p\) and using the preceding congruences for
\(\eta=\gamma_\ell,\delta_\ell\), we obtain
\[
\kappa_p\bigl(\lambda_3^p(x,y,z)\bigr)
=
\sum_{\ell=1}^g
\left(
(i^*y^\lambda)(\gamma_\ell)(i^*z^\lambda)(\delta_\ell)
-
(i^*y^\lambda)(\delta_\ell)(i^*z^\lambda)(\gamma_\ell)
\right).
\]
By the symplectic-basis formula, this is
\[
\left\langle
 i^*y^\lambda\cup i^*z^\lambda,[\Sigma,\partial\Sigma]
\right\rangle.
\]
The mod-\(p\) reduction of the pushforward of a relative fundamental
chain for \((\Sigma,\partial\Sigma)\) is precisely the absolute cycle
\(\bar\Sigma\), and hence represents
\(
[\bar\Sigma]\in H_2(M;\mathbb F_p).
\)
By naturality of the cap product for this relative-to-absolute pushforward,
\[
\left\langle
 i^*(y^\lambda\cup z^\lambda),[\Sigma,\partial\Sigma]
\right\rangle
=
\left\langle
 y^\lambda\cup z^\lambda,[\bar\Sigma]
\right\rangle.
\]
By the defining property of Poincaré duality,
\[
\left\langle
 y^\lambda\cup z^\lambda,[\bar\Sigma]
\right\rangle
=
\left\langle
 y^\lambda\cup z^\lambda\cup \operatorname{PD}_{\mathbb F_p}[\bar\Sigma],
 [M]
\right\rangle.
\]
Since \(\operatorname{PD}_{\mathbb F_p}[\bar\Sigma]=x^\lambda\), this is
\[
\left\langle
 y^\lambda\cup z^\lambda\cup x^\lambda,[M]
\right\rangle.
\]
Since \(y^\lambda\cup z^\lambda\) has degree \(2\), moving
\(x^\lambda\) to the front introduces the sign \((-1)^{2\cdot1}=+1\).
Thus
\[
\left\langle
 y^\lambda\cup z^\lambda\cup x^\lambda,[M]
\right\rangle
=
\left\langle
 x^\lambda\cup y^\lambda\cup z^\lambda,[M]
\right\rangle.
\]
Combining the displayed equalities proves the theorem.

\end{proof}

\begin{remark}
The proof used only
\[
\lambda_2(x,y)=\lambda_2(x,z)=0.
\]
It did not use \(\lambda_2(y,z)=0\).  Thus the theorem is valid on the
asymmetric Freedman--Krushkal domain.  If \(x,y,z\) lie in a common
isotropic subgroup, then all three pairwise ordinary linking numbers
vanish, all permutations are defined, and the formula above shows that
the restriction is alternating because the mod-\(p\) triple cup product of
degree-one classes is alternating for odd \(p\).  This agrees with the
antisymmetry proved by Freedman--Krushkal \cite[Proposition~6.2]{FK}.
\end{remark}

\subsection{Algebraically split surgery links and Milnor invariants}
\label{subsec:milnor}
Let
\[
L=L_1\cup\cdots\cup L_m\subset S^3
\]
be an oriented algebraically split framed link, i.e.~the pairwise linking numbers vanish, with framings
\(
\varepsilon_i p\),
 \(\varepsilon_i=\pm1.
\)
Let \(M=S^3_L\), and let \(m_i\in H_1(M;\mathbb Z)\) be the meridian
class of \(L_i\).  Let \(\alpha_i\in H^1(M;\mathbb F_p)\) be the
cohomology basis dual to the meridian basis:
\[
\alpha_i(m_j)=\delta_{ij}.
\]
Milnor's \(\bar\mu\)-invariants were introduced in
\cite{Milnor54,Milnor57}; their relation with Massey products in link
exteriors is treated by Turaev \cite{TuraevMassey}. Because \(L\) is
algebraically split, the length-three invariant \(\bar\mu_{ijk}(L)\) is
integrally well-defined for distinct \(i,j,k\): the usual indeterminacy is
generated by the pairwise linking numbers among the three components, and
those linking numbers vanish.

The following proposition is the length-three algebraically split case of
the classical relation between surgery, triple cup products, and Milnor
invariants.  Turaev's finite-coefficient cohomology-ring formula gives this
relation in general, and Cochran--Melvin recall the same input, noting that triple cup-product forms can be calculated from the
triple Milnor invariants of three-component sublinks of a surgery
presentation \cite[Theorem~1.6]{CochranMelvin}.
The zero-surgery version also appears in the form used by
Cochran--Gerges--Orr \cite[Corollary~3.5]{CGO}.
For completeness, we recall the capping argument identifying the relevant
cohomology classes and triple intersections.  The final identification with
Milnor's invariant is the length-three case of Turaev's surgery formula;
Mellor--Melvin \cite[Theorem~1]{MellorMelvin} give useful geometric
background for the corresponding Seifert-surface interpretation.

\begin{proposition}[Turaev--Milnor surgery formula in length three]
\label{prop:Turaev-Milnor}
There is a global sign
\(
s_\mu\in\{\pm1\},
\)
depending only on the convention for Milnor's \(\bar\mu\)-invariants and on
the orientation convention for triple cup products, such that for all
\(i<j<k\),
\[
\left\langle
\alpha_i\cup\alpha_j\cup\alpha_k,[M]
\right\rangle
=
 s_\mu\,\bar\mu_{ijk}(L)
\quad\text{in }\mathbb F_p.
\]
\end{proposition}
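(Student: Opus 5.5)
We plan to prove the formula by the capping argument: build explicit mod-\(p\) \(2\)-cycles Poincaré dual to the \(\alpha_i\) out of Seifert surfaces, and then compute the triple cup product as a signed count of triple intersections of these cycles.

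First we identify the dual cycles. Since \(L\) is algebraically split, each \(L_i\) has a Seifert surface \(F_i\subset S^3\). After tubing, \(F_i\) may be taken disjoint from \(L_j\) for \(j\ne i\), because \(\operatorname{lk}(L_i,L_j)=0\). The exterior \(E=S^3\setminus\nu L\) has \(H_1(E)=\mathbb Z^m\) generated by the meridians, and the longitude \(\ell_i=\partial F_i\cap \partial\nu L_i\) is null-homologous in \(E\). In \(M\), the surgery solid torus \(V_i\) has meridian disk \(D_i\) with \(\partial D_i=\varepsilon_i p\,\mu_i+\ell_i\). We form the integral chain \(\Sigma_i=F_i\cup D_i\) together with an annulus in \(\partial\nu L_i\) between \(\ell_i\) and \(\partial D_i\). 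Its boundary is \(\pm\varepsilon_i p\,\mu_i\), so mod \(p\) it is a cycle \(\bar\Sigma_i\). Concretely, \(\bar\Sigma_i\) is \(F_i\) capped off mod \(p\) by the meridian disk, with the remaining boundary \(p\) copies of \(\mu_i\).

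Next we compute \([\bar\Sigma_i]\cdot m_j\) for the meridian \(m_j\) pushed off. This count is \(\delta_{ij}\) up to the sign \(\varepsilon_i\): it vanishes for \(j\ne i\) because \(F_i\) misses \(\nu L_j\), and for \(j=i\) the meridian meets \(F_i\) once. Hence \(\operatorname{PD}[\bar\Sigma_i]=\varepsilon_i\alpha_i\) up to a sign depending only on orientation conventions. The \(\varepsilon_i\) is harmless here: the statement concerns \(\alpha_i\) directly, and the framing sign factor of Theorem~\ref{thm:intro-B} enters elsewhere, through \(m_i^\lambda=\varepsilon_i\alpha_i\). I would check this bookkeeping carefully. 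I expect the orientation of \(D_i\) to be fixed by requiring \(\Sigma_i\) to agree with \(F_i\) on \(E\), so that no \(\varepsilon_i\) enters.

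The cup product of three Poincaré dual classes is the mod-\(p\) count of transverse triple points, \(\langle\alpha_i\cup\alpha_j\cup\alpha_k,[M]\rangle=\#(\bar\Sigma_i\cap\bar\Sigma_j\cap\bar\Sigma_k)\) up to a global sign. The caps \(D_i\) lie in disjoint solid tori \(V_i\), and each \(V_i\) is met by the other two surfaces not at all. Triple points therefore occur only in \(E\), and the count reduces to \(\#(F_i\cap F_j\cap F_k)\) for Seifert surfaces in \(S^3\) that are pairwise disjoint from the other components. The formula \(\bar\mu_{ijk}(L)=\pm\#(F_i\cap F_j\cap F_k)\) for algebraically split links is classical (Cochran; Mellor--Melvin \cite[Theorem~1]{MellorMelvin}), and its sign depends only on conventions. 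Combining these gives \(s_\mu\).

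The main obstacle is making the intersection-theoretic cup product rigorous for mod-\(p\) cycles that are not closed integral surfaces. The issue is that \(\bar\Sigma_i\) has integral boundary \(p\mu_i\), so the boundary pieces could contribute. I would handle this by pushing each \(\mu_i\)-boundary into the interior of \(V_i\) and noting that triple points involving it again lie in \(V_i\), where the other two surfaces are absent. Alternatively, I would invoke Turaev's general surgery formula \cite{TuraevCohomologyRings} for this step and use the capping argument only to pin down normalizations.
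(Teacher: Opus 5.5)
Your proposal is correct, and its first half is the paper's capping argument. You cap Seifert surfaces $F_i$ in the link exterior by meridian disks of the surgery solid tori to get mod-$p$ cycles dual to $\pm\alpha_i$, and then read the triple cup product as a signed triple-intersection count, since the caps sit in disjoint solid tori.

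The real difference is the final identification with $\bar\mu_{ijk}$:
\begin{itemize}
\item The paper, after capping, simply invokes the length-three case of Turaev's surgery formula (as recalled by Cochran--Melvin). The capped surfaces there only explain why a triple-intersection interpretation exists.
\item You use the capping to localize. The caps lie in $V_i$ and the torus chains lie on $\partial\nu L_i$, both away from the other two surfaces, so every triple point lies in the exterior and the count is $\#(F_i\cap F_j\cap F_k)$ in $S^3$. The Seifert-surface formula you quote, for algebraically split links with $F_r\cap L_s=\emptyset$, then finishes.
\end{itemize}
Your route makes the capping do the actual work. It reduces the proposition to a classical statement about the link in $S^3$ and shows directly why $s_\mu$ is global. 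The paper's route instead rests on the more general surgery formula and needs no Seifert-surface normalization.

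Two small repairs are needed.
\begin{itemize}
\item The curves $\ell_i$ and $\partial D_i$ have different slopes on $\partial\nu L_i$, so there is no annulus between them. Use instead a $2$-chain on the torus with boundary $\partial D_i-\ell_i-\varepsilon_i p\mu_i$. It is disjoint from the other surfaces, so it does not affect the triple count.
\item Drop the hedge about $\varepsilon_i$. The meridian pushed into the exterior meets $\bar\Sigma_i$ only at the single point of $F_i$. Hence $[\bar\Sigma_i]\cdot m_j=\pm\delta_{ij}$, with a sign independent of $\varepsilon_i$. This is consistent with Lemma~\ref{lem:first-stage-PD}: $\partial\Sigma_i=\pm\varepsilon_i p\,m_i$ and $m_i^\lambda=\varepsilon_i\alpha_i$, so the two factors of $\varepsilon_i$ cancel.
\end{itemize}
A genuine factor $\varepsilon_i$ at that point would not have been harmless. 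It would put $\varepsilon_i\varepsilon_j\varepsilon_k$ in front of $\bar\mu_{ijk}$, which is not a global sign and would make the proposition false for mixed framings.
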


\begin{proof}
Let \(N(L)=\bigsqcup_r N(L_r)\) be a disjoint union of closed tubular
neighbourhoods of the components, and write \(\mu_r,\lambda_r\) for the
meridian and preferred longitude on \(\partial N(L_r)\). Because \(L\) is algebraically split, each preferred longitude is
null-homologous in the exterior of the full link.  Choose compact oriented
surfaces \(F_i,F_j,F_k\subset S^3\setminus\operatorname{int}N(L)\), in
general position, such that \(\partial F_r\) is the preferred longitude of
\(L_r\) on \(\partial N(L_r)\).  In the \(\varepsilon_r p\)-surgery solid
torus, a meridian disk has boundary
\[
\lambda_r+\varepsilon_r p\mu_r.
\]
Modulo \(p\), this boundary is \(\lambda_r\).  Thus, for each
\(r\in\{i,j,k\}\), the surface \(F_r\) caps off to a mod-\(p\)
\(2\)-cycle \(\widehat F_r\subset M\).  With the orientation convention
fixed once and for all, \(\widehat F_r\) has algebraic intersection \(+1\)
with the meridian \(m_r\) and intersection \(0\) with the other meridians.
Thus
\(
\operatorname{PD}_{\mathbb F_p}[\widehat F_r]=\alpha_r
\)
for \(r=i,j,k\). The caps lie in distinct surgery solid tori and may be chosen disjoint
from the other capped surfaces. Hence
\[
\left\langle
\alpha_i\cup\alpha_j\cup\alpha_k,[M]
\right\rangle
\]
is the mod-\(p\) triple intersection number of the closed mod-\(p\)
surfaces dual to \(\alpha_i,\alpha_j,\alpha_k\), up to the fixed
convention sign comparing cup products with oriented triple
intersections.

We then use the length-three case of Turaev's surgery
formula, in the form recalled by Cochran--Melvin: the triple
cup-product forms of a surgery manifold are calculated from the triple
Milnor invariants of the corresponding three-component sublinks of the
surgery presentation. Since \(L\) is algebraically split,
\(\bar\mu_{ijk}(L)\) is integrally defined, and reducing Turaev's formula
modulo \(p\) gives
\[
\left\langle
\alpha_i\cup\alpha_j\cup\alpha_k,[M]
\right\rangle
=
s_\mu\,\bar\mu_{ijk}(L)
\quad\text{in }\mathbb F_p.
\]
The sign \(s_\mu\) is global: it depends only on the convention for
Milnor invariants and on the convention comparing cup products with
oriented triple intersections, not on the ordered triple \(i<j<k\).
\end{proof}

\begin{theorem}[The FK tensor of an algebraically split \(p\)-surgery link]
\label{thm:FK-Milnor}
Let
\[
L=L_1\cup\cdots\cup L_m\subset S^3
\]
be an oriented algebraically split framed link with framings
\(
\varepsilon_i p\),
\( \varepsilon_i=\pm1,
\)
and let \(M=S^3_L\).  For \(i<j<k\),
\[
\kappa_p\bigl(\lambda_3^p(m_i,m_j,m_k)\bigr)
=
 s_\mu\,\varepsilon_i\varepsilon_j\varepsilon_k\,
 \bar\mu_{ijk}(L)
\quad\text{in }\mathbb F_p.
\]
More generally, let
\[
x_r=\sum_{i=1}^m a_{ri}m_i,
\qquad r=1,2,3,
\]
with \(a_{ri}\in\mathbb F_p\).  If the ordered triple
\((x_1,x_2,x_3)\) satisfies 
\(
\lambda_2(x_1,x_2)=\lambda_2(x_1,x_3)=0,
\)
then
\[
\kappa_p\bigl(\lambda_3^p(x_1,x_2,x_3)\bigr)
=
 s_\mu
 \sum_{1\le i<j<k\le m}
 \varepsilon_i\varepsilon_j\varepsilon_k\,
 \bar\mu_{ijk}(L)
 \det
 \begin{pmatrix}
 a_{1i}&a_{1j}&a_{1k}\\
 a_{2i}&a_{2j}&a_{2k}\\
 a_{3i}&a_{3j}&a_{3k}
 \end{pmatrix}.
\]
\end{theorem}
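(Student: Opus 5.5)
Combining Theorem~\ref{thm:FK-cup} with Proposition~\ref{prop:Turaev-Milnor}, the only new input is computing the linking-dual classes \(m_i^\lambda\) in terms of the dual basis \(\alpha_i\). First, \(H_1(M;\mathbb Z)\) has exponent \(p\), so Theorem~\ref{thm:FK-cup} applies. Since \(L\) is algebraically split, the linking matrix is \(\operatorname{diag}(\varepsilon_1p,\dots,\varepsilon_mp)\). Hence \(H_1(M;\mathbb Z)\cong\bigoplus_i\mathbb Z/p\), generated by the meridians, and the torsion linking form is given by the inverse matrix up to the global sign convention: \(\lambda_2(m_i,m_j)=\delta_{ij}\varepsilon_i/p\). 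I will check this sign against the convention \(\lambda_2(a,b)=\frac1p(A\cdot b)\) directly. The longitude \(\lambda_i\) bounds a surface \(F_i\) in the link exterior. In \(M\), the relation \(\lambda_i+\varepsilon_ip\,\mu_i=0\) holds, so \(p\,m_i\) bounds \(-\varepsilon_i\) times the capped surface (or \(+\varepsilon_i\), depending on the orientation choice). Pushing a parallel copy of \(m_i\) off, its intersection with this chain is \(\pm\varepsilon_i\), and zero with the other meridians. This step fixes the sign; any residual universal sign is absorbed into \(s_\mu\), which the statement allows. It follows that
\[
m_i^\lambda=\varepsilon_i\,\alpha_i\in H^1(M;\mathbb F_p).
\]

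Next, apply Theorem~\ref{thm:FK-cup} to the triple \((m_i,m_j,m_k)\) with \(i<j<k\). The domain condition holds because the meridians are mutually unlinked. This gives
\[
\kappa_p\bigl(\lambda_3^p(m_i,m_j,m_k)\bigr)=\varepsilon_i\varepsilon_j\varepsilon_k\langle\alpha_i\cup\alpha_j\cup\alpha_k,[M]\rangle,
\]
and Proposition~\ref{prop:Turaev-Milnor} converts the right side into \(s_\mu\,\varepsilon_i\varepsilon_j\varepsilon_k\,\bar\mu_{ijk}(L)\).

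For general \(x_r=\sum_i a_{ri}m_i\), note that \(\lambda_2\) is bilinear, so \(x\mapsto x^\lambda\) is linear and \(x_r^\lambda=\sum_i a_{ri}\varepsilon_i\alpha_i\). Under the FK domain hypothesis, Theorem~\ref{thm:FK-cup} identifies \(\kappa_p(\lambda_3^p(x_1,x_2,x_3))\) with the trilinear expression \(\langle x_1^\lambda\cup x_2^\lambda\cup x_3^\lambda,[M]\rangle\). We expand it by multilinearity. Since \(p\) is odd, cup products of degree-one classes with \(\mathbb F_p\) coefficients are graded commutative, so the form \((u,v,w)\mapsto\langle u\cup v\cup w,[M]\rangle\) is alternating (\(u\cup u=0\) because \(2u\cup u=0\) and \(2\) is invertible). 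Collecting the terms with index set \(\{i,j,k\}\) into the sorted order \(i<j<k\) yields the signed sum over permutations, namely \(\det(a_{ri},a_{rj},a_{rk})\cdot\varepsilon_i\varepsilon_j\varepsilon_k\langle\alpha_i\cup\alpha_j\cup\alpha_k,[M]\rangle\). Applying the proposition again gives the displayed formula.

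The main obstacle is the sign bookkeeping in \(m_i^\lambda=\varepsilon_i\alpha_i\). Specifically, I need to confirm that the sign relating \(\lambda_2(m_i,m_i)\) to \(1/(\varepsilon_ip)\) is uniform in \(i\), apart from the factor \(\varepsilon_i\). If the orientation conventions produce an overall \(-1\), it appears cubed as a global sign and is absorbed into \(s_\mu\). The dependence on the individual \(\varepsilon_i\) is forced, because \(1/(\varepsilon_ip)=\varepsilon_i/p\). I will do this check using the same capped surfaces \(\widehat F_r\) as in the proof of Proposition~\ref{prop:Turaev-Milnor}. There, \(p\,m_r\) is homologous to the boundary of \(\varepsilon_r\) times an integral version of \(\widehat F_r\) together with a meridian disk. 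This shows that the integral chain bounding \(p\,m_r\) reduces mod \(p\) to \(\varepsilon_r\widehat F_r\), up to a universal sign, so by Lemma~\ref{lem:first-stage-PD} its Poincaré dual is \(\varepsilon_r\alpha_r\).
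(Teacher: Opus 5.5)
Your proposal follows the paper's proof step for step. The diagonal linking matrix gives $m_i^\lambda=\varepsilon_i\alpha_i$, Theorem~\ref{thm:FK-cup} turns $\kappa_p(\lambda_3^p)$ into the triple cup product, Proposition~\ref{prop:Turaev-Milnor} supplies $s_\mu\bar\mu_{ijk}(L)$, and the general case is the trilinear expansion using alternation of degree-one cup products.

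The only divergence is the global sign. The paper settles it by asserting that $\lambda_2$ is represented by $B^{-1}$, whereas you defer it, and your caution is warranted. Take the stated convention $\lambda_2(a,b)=\frac1p(A\cdot b)$ with standard orientations, and let $D_i$ be the surgery meridian disk with $\partial D_i=\lambda_i+\varepsilon_ip\mu_i$. Then the chain bounding $p\,m_i$ is $\varepsilon_i(D_i-F_i)$ plus a chain on the boundary torus, and an outward push-off of $m_i$ meets only the $-\varepsilon_iF_i$ part, once. So $\lambda_2(m_i,m_i)=-\varepsilon_i/p$, and the leftover sign is genuinely $-1$. Folding it into $s_\mu$, as you propose, therefore redefines the sign of Proposition~\ref{prop:Turaev-Milnor}. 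That is harmless for the embedding obstruction, but you should state it explicitly.
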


\begin{proof}
The surgery linking matrix of the framed link \(L\) is diagonal:
\[
B=\operatorname{diag}(\varepsilon_1p,\ldots,\varepsilon_mp).
\]
Thus
\(
H_1(M;\mathbb Z)\cong(\mathbb Z/p)^m,
\)
with meridian basis \(m_1,\ldots,m_m\), and the ordinary torsion linking
form is represented by \(B^{-1}\) modulo \(\mathbb Z\).  Hence
\[
\lambda_2(m_i,m_j)=0\quad (i\ne j),
\qquad
\lambda_2(m_i,m_i)=\frac{\varepsilon_i}{p}.
\]
It follows that
\(
m_i^\lambda=\varepsilon_i\alpha_i.
\) By
Theorem~\ref{thm:FK-cup},
\[
\kappa_p\bigl(\lambda_3^p(m_i,m_j,m_k)\bigr)
=
\left\langle
m_i^\lambda\cup m_j^\lambda\cup m_k^\lambda,[M]
\right\rangle=\varepsilon_i\varepsilon_j\varepsilon_k
\left\langle
\alpha_i\cup\alpha_j\cup\alpha_k,[M]
\right\rangle.
\]
Proposition~\ref{prop:Turaev-Milnor} gives
\[
\left\langle
\alpha_i\cup\alpha_j\cup\alpha_k,[M]
\right\rangle
=
 s_\mu\,\bar\mu_{ijk}(L),
\]
which proves the meridian formula.

For the determinant formula, observe that
\[
x_r^\lambda
=
\sum_{i=1}^m a_{ri}m_i^\lambda
=
\sum_{i=1}^m a_{ri}\varepsilon_i\alpha_i.
\]
By Theorem~\ref{thm:FK-cup},
\[
\kappa_p\bigl(\lambda_3^p(x_1,x_2,x_3)\bigr)
=
\left\langle
x_1^\lambda\cup x_2^\lambda\cup x_3^\lambda,[M]
\right\rangle.
\]
Expanding trilinearly and using the alternatingness of the cup product on
degree-one classes gives
\[
\sum_{1\le i<j<k\le m}
\varepsilon_i\varepsilon_j\varepsilon_k
\left\langle
\alpha_i\cup\alpha_j\cup\alpha_k,[M]
\right\rangle
\det
 \begin{pmatrix}
 a_{1i}&a_{1j}&a_{1k}\\
 a_{2i}&a_{2j}&a_{2k}\\
 a_{3i}&a_{3j}&a_{3k}
 \end{pmatrix}.
\]
Substituting the Turaev--Milnor formula completes the proof.
\end{proof}

\begin{remark}
The factor \(\varepsilon_i\varepsilon_j\varepsilon_k\) does not come from
Milnor's invariant.  It comes from torsion-linking duality:
\(
m_i^\lambda=\varepsilon_i\alpha_i.
\)
Therefore the unsigned formula
\[
\kappa_p(\lambda_3^p(m_i,m_j,m_k))=s_\mu\bar\mu_{ijk}(L)
\]
is valid only when the three framings are \(+p\).
\end{remark}

\section{Realizing triple cup tensors with fixed torsion linking form}
\label{sec:realization}

In this section we prove Theorem~\ref{thm:intro-C} by an explicit Borromean surgery construction. Fix an odd prime \(p\), and put
\[
\varepsilon_i=
\begin{cases}
+1,&1\le i\le3,\\
-1,&4\le i\le6.
\end{cases}
\]
Let
\[
\tau=\sum_{1\le i<j<k\le6}\tau_{ijk}\,
\alpha_i\wedge\alpha_j\wedge\alpha_k
\]
be the given alternating triple cup tensor.  Choose integer lifts
\(b_{ijk}\in\mathbb Z\) such that
\(
b_{ijk}\equiv \tau_{ijk}\pmod p.
\) 
We consider the oriented six-component unlink
\[
U=U_1\cup\cdots\cup U_6\subset S^3
\]
with framing \(\varepsilon_i p\), and let \(m_i\) denote the
oriented meridian of \(U_i\). For a triple \(I=(i,j,k)\), \(i<j<k\), an \(I\)-Borromean block is a
local Borromean band-sum operation on the three components
\(U_i,U_j,U_k\).  Equivalently, it is surgery on a simple one-node
\(Y\)-clasper whose three disk-leaves are meridional to
\(U_i,U_j,U_k\), in the sense of Habiro's clasper calculus
\cite{Habiro}.  We choose all supports pairwise disjoint. Recall that \(s_\mu\) denotes the fixed global sign in the Turaev--Milnor formula (Proposition~\ref{prop:Turaev-Milnor}). We orient a positive \(I=(i,j,k)\)-block so that it contributes \(+1\) to the normalized Milnor coefficient \(s_\mu\bar\mu_{ijk}\); a negative block contributes \(-1\).

For each \(i<j<k\), insert \(|b_{ijk}|\) positive \((i,j,k)\)-blocks if
\(b_{ijk}>0\), and \(|b_{ijk}|\) negative \((i,j,k)\)-blocks if
\(b_{ijk}<0\).  Let the resulting framed link be \(L_{p,\tau}\), and
set
\[
M_{p,\tau}=S^3_{L_{p,\tau}}.
\]

\begin{lemma}
\label{lem:Borromean-Milnor}
The link \(L_{p,\tau}\) is algebraically split, has framings
\(p,p,p,-p,-p,-p\), and satisfies
\[
s_\mu\bar\mu_{ijk}(L_{p,\tau})\equiv \tau_{ijk}\pmod p
\]
for every \(i<j<k\).
\end{lemma}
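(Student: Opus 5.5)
The plan is to handle the three assertions separately. Framings and algebraic splitting are local bookkeeping. The Milnor coefficients are additive over disjoint Borromean blocks.

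\emph{Framings and linking numbers.} A Borromean band-sum, equivalently a $Y$-clasper surgery with leaves meridional to $U_i,U_j,U_k$, changes each of the three components by a band-sum with one component of a Borromean rings in a ball. Since each Borromean component is unknotted and has linking number zero with the others, neither the pairwise linking numbers nor the framing integers change. The same holds after inserting all blocks, because their supports are pairwise disjoint. Hence every pairwise linking number of $L_{p,\tau}$ equals that of the unlink $U$, namely $0$. Each framing remains $\varepsilon_i p$, which is $p,p,p,-p,-p,-p$. So $L_{p,\tau}$ is algebraically split, and by the discussion preceding Proposition~\ref{prop:Turaev-Milnor} each $\bar\mu_{ijk}(L_{p,\tau})$ is an integer.

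\emph{Computing $\bar\mu_{ijk}$.} I would use the standard additivity of length-three Milnor invariants under disjoint clasper surgeries. For algebraically split links, $\bar\mu_{ijk}$ is a finite type invariant of degree $2$. Surgery on a $Y$-clasper $C$ with leaves meridional to components $a,b,c$ changes $\bar\mu_{ijk}$ by $\pm1$ if $\{a,b,c\}=\{i,j,k\}$ and by $0$ otherwise. Surgeries on disjoint $Y$-claspers contribute additively, since the cross terms have degree at least $4$. Concretely, one can see this through Seifert surfaces. Choose disjoint Seifert disks for the unlink. Each $I$-block modifies the disks locally, inside its support, so that the three modified surfaces acquire exactly one triple point, of sign $\pm1$. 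Blocks whose index set differs from $\{i,j,k\}$ create no triple points of $F_i\cap F_j\cap F_k$. The Seifert-surface (Mellor--Melvin) interpretation of $\bar\mu_{ijk}$ as minus the signed triple intersection count of the $F_r$ then gives
\[
\bar\mu_{ijk}(L_{p,\tau})=\sum_{\text{$(i,j,k)$-blocks}}(\pm1).
\]
With the orientation normalization fixed in the construction, each positive block contributes $+1$ to $s_\mu\bar\mu_{ijk}$ and each negative block contributes $-1$. Therefore $s_\mu\bar\mu_{ijk}(L_{p,\tau})=b_{ijk}\equiv\tau_{ijk}\pmod p$.

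\emph{Main obstacle.} The hardest step is justifying additivity and locality: blocks on other triples must not contribute to $\bar\mu_{ijk}$, and several blocks on the same triple must not interact. I would argue this with the Seifert-surface triple-intersection formula, not with finite type theory. All modifications are supported in disjoint balls. The surfaces $F_r$ can be taken to be the original disjoint disks, altered only inside those balls. Triple points therefore occur only inside the $I$-block balls with $I=\{i,j,k\}$, and each such ball carries exactly one triple point. A block on an index set sharing at most two indices with $\{i,j,k\}$ creates only double curves among $F_i,F_j,F_k$ inside its ball, and no triple points.

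The sign convention is the other delicate point. It is handled by definition: a positive block is oriented so that it contributes $+1$ to the normalized coefficient. Reversing the clasper's orientation (mirroring a band) negates this contribution.
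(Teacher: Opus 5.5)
Your proposal is correct and follows the paper's proof: framings and pairwise linking numbers are preserved locally, each $(i,j,k)$-block contributes $\pm1$, blocks on other triples contribute $0$, and contributions add because the supports are disjoint. The only difference is emphasis. You make the Mellor--Melvin triple-intersection count for clean Seifert surfaces (surfaces disjoint from the other components) the main tool, and this justifies locality and additivity explicitly, whereas the paper asserts them via the sublink/trivial-tangle observation and cites the Seifert-surface picture only as corroboration.
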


\begin{proof}
A Borromean block has zero pairwise linking numbers.  Inserting such a
block into a small ball meeting three components in trivial arcs changes
length-three linking data but preserves all framings and all pairwise
linking numbers.  Hence the resulting link remains algebraically split and
has the same diagonal framing matrix.

For length-three Milnor invariants, the block supported on
\((i,j,k)\) contributes \(\pm1\) to \(s_\mu\bar\mu_{ijk}\). It contributes \(0\) to
\(s_\mu\bar\mu_{abc}\) for triples \((a,b,c)\ne(i,j,k)\): Milnor's
length-three invariant of an algebraically split link depends only on the
corresponding three-component sublink, and deleting any one leaf of the local
Borromean tangle leaves a trivial tangle on the remaining components. This is also reflected in the Seifert-surface triple-intersection
interpretation \cite{MellorMelvin}, and follows directly from the
elementary effect of a simple \(Y\)-clasper in Habiro's calculus
\cite{Habiro}. Because the supports of the blocks are
disjoint, the contributions add. Therefore
\[
s_\mu\bar\mu_{ijk}(L_{p,\tau})\equiv b_{ijk}\equiv\tau_{ijk}
\pmod p.
\]
\end{proof}

\begin{lemma}
\label{lem:H1-lambda2}
For \(M_{p,\tau}\),
\(
H_1(M_{p,\tau};\mathbb Z)\cong(\mathbb Z/p)^6.
\)
With respect to the meridian basis \(m_1,\ldots,m_6\),
\[
\lambda_2(m_i,m_j)=0\quad(i\ne j),
\qquad
\lambda_2(m_i,m_i)=\frac{\varepsilon_i}{p}.
\]
Thus
\[
\lambda_2(M_{p,\tau})
\cong
\frac1p\langle1,1,1,-1,-1,-1\rangle.
\]
\end{lemma}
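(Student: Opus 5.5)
The plan is to read everything off the surgery presentation of \(M_{p,\tau}=S^3_{L_{p,\tau}}\) and its linking matrix. For integral surgery on a framed link \(L\subset S^3\) with linking matrix \(B\), the standard handlebody description of the trace \(W_L\) gives \(H_1(S^3_L;\mathbb Z)\cong\operatorname{coker}(B\colon\mathbb Z^m\to\mathbb Z^m)\), generated by the meridians \(m_i\). Whenever \(\det B\neq0\), the torsion linking form in the meridian basis is represented by \(\pm B^{-1}\bmod\mathbb Z\). The sign depends on orientation conventions, and I will fix it to agree with the convention used in the proof of Theorem~\ref{thm:FK-Milnor}, where it was asserted in the same form. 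So the whole lemma reduces to computing \(B\) for \(L_{p,\tau}\).

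By Lemma~\ref{lem:Borromean-Milnor}, \(L_{p,\tau}\) is algebraically split and has framings \(\varepsilon_i p\). Hence its linking matrix is
\[
B=\operatorname{diag}(p,p,p,-p,-p,-p).
\]
I will double-check the framing statement directly. Each Borromean band-sum is supported in a ball meeting \(U_i,U_j,U_k\) in trivial arcs, and the inserted tangle has pairwise linking zero. Since the framing is measured by linking with the pushoff, each component's framing integer is unchanged, and so are the off-diagonal linking numbers. The cokernel of \(B\) is then \(\bigoplus_{i=1}^6\mathbb Z/p\), generated by \(m_1,\dots,m_6\), so \(H_1(M_{p,\tau};\mathbb Z)\cong(\mathbb Z/p)^6\). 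Also \(B^{-1}=\operatorname{diag}(\varepsilon_i/p)\), which gives
\[
\lambda_2(m_i,m_j)=0\quad(i\neq j),\qquad \lambda_2(m_i,m_i)=\frac{\varepsilon_i}{p}.
\]
This is exactly the diagonal form \(\frac1p\langle1,1,1,-1,-1,-1\rangle\).

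As a geometric sanity check of the diagonal entries, I would use the construction from the proof of Proposition~\ref{prop:Turaev-Milnor}. Because \(L\) is algebraically split, the preferred longitude \(\lambda_i\) bounds a surface \(F_i\) in the link exterior. The meridian disk of the surgery torus bounds \(\lambda_i+\varepsilon_i p\,m_i\), so \(F_i\) capped by the reversed disk is a 2-chain with boundary \(\varepsilon_i p\,m_i\) (up to sign). Applying the definition \(\lambda_2(a,b)=\frac1p(A\cdot b)\) with this chain shows that \(\lambda_2(m_i,m_j)\) is \(\varepsilon_i/p\) times the intersection of \(F_i\cup\text{disk}\) with a pushed-off \(m_j\). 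That intersection is \(\pm\delta_{ij}\).

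The main obstacle is only the global sign convention in \(\lambda_2=\pm B^{-1}\). I would settle it by a single check on the \(+p\) surgery on the unknot, which is the lens space \(L(p,1)\) up to orientation, where the convention of Section~\ref{sec:FK} gives \(\lambda_2(m,m)=1/p\). Consistency with Theorem~\ref{thm:FK-Milnor} then fixes the convention for all components.
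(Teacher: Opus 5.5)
Your argument is the paper's: Lemma~\ref{lem:Borromean-Milnor} gives the diagonal linking matrix \(B=\operatorname{diag}(p,p,p,-p,-p,-p)\), its cokernel is \((\mathbb Z/p)^6\) generated by the meridians, and the linking form is read off by inverting \(B\) modulo \(\mathbb Z\). The one place you go beyond the paper is the sign check, and it does not come out as you predict: with the Section~\ref{sec:FK} convention and the usual orientations, take the chain \(A=D-F\), where \(D\) is the meridian disk of the surgery solid torus with \(\partial D=\lambda+p\,m\) and \(F\) is a Seifert surface for the longitude \(\lambda\) in the exterior; then \(\partial A=p\,m\) and \(A\cdot m'=-F\cdot m'=-1\), so \(+p\) surgery on the unknot has \(\lambda_2(m,m)=-1/p\), hence under these conventions the form is \(-B^{-1}\) and each diagonal entry is \(-\varepsilon_i/p\) (the paper's one-line assertion of \(B^{-1}\) has the same issue), although the isometry class \(\frac1p\langle1,1,1,-1,-1,-1\rangle\) is unaffected because this form is isometric to its negative.
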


\begin{proof}
By Lemma~\ref{lem:Borromean-Milnor}, the surgery linking matrix is
\[
B_p=\operatorname{diag}(p,p,p,-p,-p,-p).
\]
Therefore
\(
H_1(M_{p,\tau};\mathbb Z)
\cong \operatorname{coker}(B_p)
\cong(\mathbb Z/p)^6,
\)
and the ordinary torsion linking form is represented by \(B_p^{-1}\) modulo
\(\mathbb Z\).
\end{proof}

\begin{proof}[Proof of Theorem~\ref{thm:intro-C}]
Let \(\alpha_i\) be dual to the meridian basis \(m_i\).  Lemma~\ref{lem:H1-lambda2}
gives the stated homology group and ordinary torsion linking form. By Lemma~\ref{lem:Borromean-Milnor} and Proposition~\ref{prop:Turaev-Milnor},
\[
\left\langle
\alpha_i\cup\alpha_j\cup\alpha_k,[M_{p,\tau}]
\right\rangle
=
 s_\mu\bar\mu_{ijk}(L_{p,\tau})
=
\tau_{ijk}
\]
in \(\mathbb F_p\).  This completes the proof.
\end{proof}

\begin{remark}\label{rem:coefficient-conventions}
For the constructed link \(L_{p,\tau}\), Lemma~\ref{lem:Borromean-Milnor} and
Proposition~\ref{prop:Turaev-Milnor} imply that the normalized Milnor coefficients
of the constructed link satisfy
\[
s_\mu\bar\mu_{ijk}(L_{p,\tau})\equiv \tau_{ijk}\pmod p.
\]
The corresponding meridian coefficients of
\(\kappa_p(\lambda_3^p)\) differ by the framing signs.  Indeed, since
\(
m_i^\lambda=\varepsilon_i\alpha_i,
\)
if, for \(i<j<k\), we set
\[
n_{ijk}:=\kappa_p\bigl(\lambda_3^p(m_i,m_j,m_k)\bigr),
\]
then
\[
n_{ijk}\equiv
\varepsilon_i\varepsilon_j\varepsilon_k\,\tau_{ijk}
\pmod p.
\]
\end{remark}

\section{The cup-product embedding obstruction}
\label{sec:criterion}

Let \(M\) be a rational homology \(3\)-sphere whose first homology has
exponent \(p\), and let
\[
\lambda^\#\colon H_1(M;\mathbb Z)\longrightarrow H^1(M;\mathbb F_p),
\qquad x\longmapsto x^\lambda,
\]
be the linking-duality isomorphism.  For a subgroup
\(A\subset H_1(M;\mathbb Z)\), set
\[
A^\lambda=\lambda^\#(A)
\subset H^1(M;\mathbb F_p).
\]
Because \(\lambda_2\) is nonsingular, \(\lambda^\#\) is an isomorphism. 

\begin{theorem}[Embedding obstruction in cup-product language]
\label{thm:cup-obstruction}
Let \(M\) be a rational homology \(3\)-sphere whose first homology has
exponent \(p\).  If \(M\) embeds locally flatly in \(S^4\), or in an
integer homology \(4\)-sphere, then the two Hantzsche Lagrangians
\(A,B\subset H_1(M;\mathbb Z)\) supplied by Theorem~\ref{thm:FK} satisfy
\[
\left\langle u\cup v\cup w,[M]\right\rangle=0
\]
for all \(u,v,w\in A^\lambda\), and also for all \(u,v,w\in B^\lambda\).
\end{theorem}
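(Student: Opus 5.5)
The argument combines Theorem~\ref{thm:FK} with Theorem~\ref{thm:FK-cup}. Suppose \(M\) embeds locally flatly in \(S^4\) or in an integer homology \(4\)-sphere. Theorem~\ref{thm:FK} then supplies a dual Hantzsche pair \(A,B\) with \(\lambda_3|_A=0\) and \(\lambda_3|_B=0\). Since \(H_1(M;\mathbb Z)\) has exponent \(p\), we may compute \(\lambda_3\) with \(t=p\). If the Freedman--Krushkal vanishing is stated for a different admissible exponent \(t\), I would first note that \(\lambda_3\) with \(t=p\) is determined by the exponent-\(t\) version, or simply take \(t=p\) from the outset. Either way, \(\lambda_3^p\) vanishes on \(A\times A\times A\) and on \(B\times B\times B\).

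Now fix \(u,v,w\in A^\lambda\). Because \(\lambda^\#\) is an isomorphism, we can write \(u=x^\lambda\), \(v=y^\lambda\), \(w=z^\lambda\) with \(x,y,z\in A\). The subgroup \(A\) is a Hantzsche Lagrangian, so \(\lambda_2|_{A\times A}=0\). In particular \(\lambda_2(x,y)=\lambda_2(x,z)=0\), which puts \((x,y,z)\) in the Freedman--Krushkal domain. Theorem~\ref{thm:FK-cup} then gives
\[
\left\langle u\cup v\cup w,[M]\right\rangle
=\kappa_p\bigl(\lambda_3^p(x,y,z)\bigr)=\kappa_p(0)=0 .
\]
The same argument applied to \(B\) handles \(u,v,w\in B^\lambda\).

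The only point needing care is the meaning of ``\(\lambda_3|_A=0\)''. I read it as vanishing on all ordered triples of \(A\); this is well-defined because every such triple satisfies the domain condition. If instead it is meant on a symmetrized or restricted set of triples, I would use the Remark after Theorem~\ref{thm:FK-cup}: on an isotropic subgroup the form is alternating, so vanishing on any ordering of a triple gives vanishing on all orderings. The consequence stated in Theorem~\ref{thm:intro-D} is then just the contrapositive. If the triple cup tensor is nonzero on \(A^\lambda\) or on \(B^\lambda\) for every dual Hantzsche pair \((A,B)\), no embedding can exist. I do not expect a real obstacle here beyond matching the exponent convention \(t=p\) with the convention used in Theorem~\ref{thm:FK}.
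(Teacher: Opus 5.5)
Your proposal is correct and follows the paper's proof. Theorem~\ref{thm:FK} gives the vanishing of \(\lambda_3^p\) on \(A\) and \(B\); since Lagrangian triples lie in the Freedman--Krushkal domain, Theorem~\ref{thm:FK-cup} and the isomorphism \(\kappa_p\) convert this into the vanishing of the triple cup product on \(A^\lambda\) and \(B^\lambda\), and your extra remarks about taking \(t=p\) and about ordered triples only make explicit what the paper takes for granted.
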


\begin{proof}
By Theorem~\ref{thm:FK}, the two complementary-region kernels \(A\) and
\(B\) form a dual Hantzsche pair and satisfy
\[
\lambda_3^p|_A=0,
\qquad
\lambda_3^p|_B=0.
\]
Theorem~\ref{thm:FK-cup} identifies
\(\kappa_p(\lambda_3^p)\) on such triples with the mod-\(p\) triple cup
product on the linking-dual subspace.  Because \(\kappa_p\) is an
isomorphism, vanishing of
\(\lambda_3^p\) on \(A\) and \(B\) is exactly the stated vanishing of the
triple cup tensor on \(A^\lambda\) and \(B^\lambda\).
\end{proof}

\begin{corollary}
\label{cor:no-dual-null}
Let \(M\) be a rational homology \(3\)-sphere whose first homology has
exponent \(p\).  Suppose there is no dual Hantzsche pair
\[
H_1(M;\mathbb Z)=A\oplus B
\]
for which the mod-\(p\) triple cup product vanishes on both
\(A^\lambda\) and \(B^\lambda\).  Then \(M\) admits no locally flat
embedding in \(S^4\), and no locally flat embedding in any integer
homology \(4\)-sphere.
\end{corollary}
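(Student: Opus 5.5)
This is a contrapositive of Theorem~\ref{thm:cup-obstruction}, and the plan is to argue by contradiction. Suppose \(M\) admits a locally flat embedding in \(S^4\), or in some integer homology \(4\)-sphere. By Theorem~\ref{thm:FK} the embedding has two complementary regions \(X\) and \(Y\). The kernels
\[
A=\ker\{H_1(M)\to H_1(X)\},\qquad B=\ker\{H_1(M)\to H_1(Y)\}
\]
then form a dual Hantzsche pair \(H_1(M;\mathbb Z)=A\oplus B\).

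Next I apply Theorem~\ref{thm:cup-obstruction} to this same pair. It gives
\[
\langle u\cup v\cup w,[M]\rangle=0
\]
for all \(u,v,w\in A^\lambda\), and likewise for all \(u,v,w\in B^\lambda\). So the mod-\(p\) triple cup product vanishes on both \(A^\lambda\) and \(B^\lambda\). This exhibits exactly the kind of dual Hantzsche pair that the hypothesis rules out, which is the desired contradiction.

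The only point to check is that the integer homology \(4\)-sphere case really is covered. Theorem~\ref{thm:FK} (the remark following FK's Theorem~1.2) gives both the dual Hantzsche pair and the vanishing of \(\lambda_3\) in that setting as well. Theorem~\ref{thm:cup-obstruction} is stated for both ambient manifolds, so no extra step is needed there. There is no real obstacle; the argument is bookkeeping that matches the hypothesis of the corollary with the conclusion of Theorem~\ref{thm:cup-obstruction}.
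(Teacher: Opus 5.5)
Your proposal is correct and takes the same approach as the paper, whose proof simply states that the corollary is the contrapositive of Theorem~\ref{thm:cup-obstruction}. Your spelled-out version, which takes the dual Hantzsche pair from Theorem~\ref{thm:FK} and applies Theorem~\ref{thm:cup-obstruction} to it in both ambient settings, is exactly that contrapositive made explicit.
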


\begin{proof}
This is the contrapositive of Theorem~\ref{thm:cup-obstruction}.
\end{proof}

For the manifolds \(M_{p,\tau}\) constructed in
Section~\ref{sec:realization}, the linking form is
\[
\lambda_2(M_{p,\tau})
\cong
\frac1p\langle1,1,1,-1,-1,-1\rangle.
\]
Equivalently, the \(\mathbb F_p\)-valued form
\(
(u,v)\longmapsto \kappa_p(\lambda_2(u,v))
\)
has matrix
\[
\langle1,1,1,-1,-1,-1\rangle.
\]
This is the split six-dimensional quadratic
space over \(\mathbb F_p\).
Under this identification, a Hantzsche Lagrangian maps to a maximal
isotropic \(3\)-plane in cohomology.  Thus the obstruction becomes purely
algebraic: the cup tensor must have a dual pair of null Lagrangians.

\begin{corollary}[Obstruction for the realized family]
\label{cor:realized-obstruction}
Let \(\tau\in\bigwedge^3(\mathbb F_p^6)^*\) be viewed as a triple cup
tensor on the split quadratic space
\((\mathbb F_p^6,\langle1,1,1,-1,-1,-1\rangle)\).  If \(\tau\) has no dual
null Lagrangian pair, then the manifold \(M_{p,\tau}\) from
Theorem~\ref{thm:intro-C} admits no locally flat embedding in \(S^4\),
and no locally flat embedding in any integer homology \(4\)-sphere.
\end{corollary}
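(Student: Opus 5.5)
This follows from Corollary~\ref{cor:no-dual-null} once the hypotheses are transported to $M_{p,\tau}$. First, by Lemma~\ref{lem:H1-lambda2}, $H_1(M_{p,\tau};\mathbb Z)\cong(\mathbb Z/p)^6$ has exponent $p$, and in the meridian basis the $\mathbb F_p$-valued form $\kappa_p\circ\lambda_2$ has matrix $\langle1,1,1,-1,-1,-1\rangle$. The duality map $\lambda^\#$ sends $m_i\mapsto\varepsilon_i\alpha_i$. Let $\beta$ be the form on $H^1(M_{p,\tau};\mathbb F_p)$ transported from $\kappa_p\circ\lambda_2$ by $\lambda^\#$. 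Since $\varepsilon_i^2=1$, $\beta$ is again $\langle1,1,1,-1,-1,-1\rangle$ in the basis $\alpha_1,\dots,\alpha_6$. Thus $\lambda^\#$ is an isometry from $(H_1,\kappa_p\lambda_2)$ onto the split quadratic space $(\mathbb F_p^6,\langle1,1,1,-1,-1,-1\rangle)$ with basis $\alpha_i$. By Theorem~\ref{thm:intro-C}, the triple cup tensor on this space is exactly $\tau$.

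Second, I will check that $\lambda^\#$ carries dual Hantzsche pairs to dual Lagrangian pairs. Since $H_1$ is elementary abelian, a subgroup $A$ is an $\mathbb F_p$-subspace, and $|A|^2=p^6$ means $\dim A=3$. Also $\lambda_2|_{A\times A}=0$ is equivalent to $A^\lambda$ being totally isotropic for $\beta$. So Hantzsche Lagrangians correspond exactly to maximal isotropic $3$-planes, and a decomposition $H_1=A\oplus B$ corresponds to a transverse pair $A^\lambda\oplus B^\lambda$. I take this to be the meaning of ``dual null Lagrangian pair'' for $\tau$: a transverse pair of maximal isotropic planes on each of which $\tau$ vanishes. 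Note that the isometry involves the signs $\varepsilon_i$, but those signs are absorbed because $\beta$ is defined via $\lambda^\#$ itself.

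Third, suppose $\tau$ has no such pair. Then there is no dual Hantzsche pair $A\oplus B$ with the triple cup product vanishing on both $A^\lambda$ and $B^\lambda$, and Corollary~\ref{cor:no-dual-null} gives non-embedding in $S^4$ and in every integer homology $4$-sphere.

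The only delicate point is bookkeeping. The cup product is evaluated on $A^\lambda$, which consists of cohomology classes, so vanishing is tested directly against $\tau$ with no extra $\varepsilon$ twist (contrast Remark~\ref{rem:coefficient-conventions}). What must be verified is that the quadratic form used to define ``Lagrangian'' for $\tau$ is the transported $\beta$, and that $\beta$ coincides with the standard diagonal form in the $\alpha$-basis. Both facts were checked above.
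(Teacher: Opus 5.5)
Your proposal is correct and follows essentially the same route as the paper, which also combines Theorem~\ref{thm:intro-C} with the cup-product obstruction (Theorem~\ref{thm:cup-obstruction}, whose contrapositive is Corollary~\ref{cor:no-dual-null}) and the identification of Hantzsche Lagrangians with maximal isotropic $3$-planes in cohomology. You also check explicitly that $\lambda^\#$ transports $\kappa_p\circ\lambda_2$ to the diagonal form $\langle1,1,1,-1,-1,-1\rangle$ in the $\alpha$-basis, with the signs $\varepsilon_i$ cancelling; the paper only asserts this in the paragraph preceding the corollary.
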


\begin{proof}
By Theorem~\ref{thm:intro-C}, the triple cup tensor of \(M_{p,\tau}\) is
\(\tau\) in the chosen cohomology basis.  If \(M_{p,\tau}\) embedded, then
Theorem~\ref{thm:cup-obstruction} would provide a dual Hantzsche pair
whose linking-dual cohomology subspaces are both null for \(\tau\). This
contradicts the hypothesis that \(\tau\) has no dual null Lagrangian pair.
\end{proof}

\section{The spinor/Klein classification of no-dual-null tensors}
\label{sec:klein}

In this section, we prove Theorem \ref{thm:intro-E} and deduce
Corollary~\ref{cor:intro-F}.  The Klein correspondence and the two
rulings of maximal isotropic planes on the split \(6\)-dimensional
quadratic space are classical; see, for example, Hirschfeld's treatment
of finite projective geometries \cite{Hirschfeld}. Below, we recall the needed facts for classifying the specific no-dual-null condition appearing in
Corollary~\ref{cor:realized-obstruction}.

\subsection{The spinor/Klein model}

Let
\[
W=\mathbb F_p^4
\]
with ordered basis \(w_1,w_2,w_3,w_4\), and let
\[
V=\bigwedge^2 W.
\]
Fix a volume form
\[
\operatorname{vol}=w_1\wedge w_2\wedge w_3\wedge w_4
\]
and define a symmetric bilinear form on \(V\) by
\[
\langle\alpha,\beta\rangle\operatorname{vol}=\alpha\wedge\beta.
\]
Fix the basis
\[
u_1=w_1\wedge w_2,
\quad
u_2=w_1\wedge w_3,
\quad
u_3=w_1\wedge w_4,
\]
\[
u_4=w_2\wedge w_3,
\quad
u_5=w_2\wedge w_4,
\quad
u_6=w_3\wedge w_4.
\]
Observe that the only nonzero pairings among these basis vectors are
\[
\langle u_1,u_6\rangle=1,
\qquad
\langle u_2,u_5\rangle=-1,
\qquad
\langle u_3,u_4\rangle=1.
\]
Thus \(V\) is a split \(6\)-dimensional quadratic space over
\(\mathbb F_p\). For \(0\ne x\in W\), define
\[
A_{[x]}=x\wedge W.
\]
Also, for \(0\ne\varphi\in W^*\), define
\[
B_{[\varphi]}=\bigwedge^2(\ker\varphi).
\]

\begin{lemma}
\label{lem:Klein-rulings}
The maximal isotropic \(3\)-planes in \(V\) are exactly the two families
\[
A_{[x]}=x\wedge W,
\qquad [x]\in\mathbb P(W),
\]
and
\[
B_{[\varphi]}=\bigwedge^2(\ker\varphi),
\qquad [\varphi]\in\mathbb P(W^*).
\]
\end{lemma}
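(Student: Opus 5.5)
First I would check that both families consist of isotropic 3-planes, and then show that every maximal isotropic subspace is one of them. For \(A_{[x]}\): the map \(W\to V\), \(w\mapsto x\wedge w\), has kernel \(\langle x\rangle\), so \(\dim A_{[x]}=3\). Any two elements satisfy \((x\wedge a)\wedge(x\wedge b)=0\) because \(x\wedge x=0\), so \(A_{[x]}\) is isotropic. For \(B_{[\varphi]}\): \(\ker\varphi\) is \(3\)-dimensional, so \(\bigwedge^2(\ker\varphi)\) is \(3\)-dimensional. Any \(4\)-fold wedge of vectors from a \(3\)-dimensional subspace vanishes, so \(B_{[\varphi]}\) is isotropic. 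Since \(V\) is split of dimension \(6\) (the basis computation above exhibits three hyperbolic pairs), maximal isotropic subspaces have dimension exactly \(3\) and every isotropic subspace lies in one. Distinct points \([x]\) give distinct \(A_{[x]}\), since \(x\) spans the intersection of the supports of the elements of \(A_{[x]}\); the same holds dually for \(B_{[\varphi]}\). An \(A\)-plane is never a \(B\)-plane, because \(A_{[x]}\) contains \(x\wedge w\) for every \(w\notin\ker\varphi\).

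For the converse, let \(\Lambda\subset V\) be an isotropic \(3\)-plane. The key observation is that an isotropic vector \(\alpha\) (with \(\alpha\wedge\alpha=0\)) is decomposable: \(\alpha=a\wedge b\). Since \(p\) is odd, \(\alpha\wedge\alpha=0\) is equivalent to \(\langle\alpha,\alpha\rangle=0\), and in rank \(4\) a nonzero \(2\)-vector is either decomposable or has rank \(4\) with \(\alpha\wedge\alpha\neq0\). Every nonzero element of \(\Lambda\) is isotropic, so \(\Lambda\) corresponds to a projective plane of lines \(\mathbb P(\langle a,b\rangle)\subset\mathbb P(W)\). Polarization gives \(\langle\alpha,\beta\rangle=0\) for all \(\alpha,\beta\in\Lambda\), which says that any two of these lines \(\ell_\alpha,\ell_\beta\) satisfy \(a\wedge b\wedge c\wedge d=0\); that is, the corresponding \(2\)-planes \(P_\alpha,P_\beta\subset W\) meet nontrivially.

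So we have a \(2\)-dimensional projective family of pairwise-intersecting \(2\)-planes in \(W\). The classical dichotomy then applies. Take three independent elements with planes \(P_1,P_2,P_3\). Either all three share a common line \(\langle x\rangle\), or they span only a \(3\)-space \(U\) (three pairwise-meeting planes without a common line lie in a \(3\)-space). In the first case every element of \(\Lambda\) lies in \(x\wedge W\), so \(\Lambda\subseteq A_{[x]}\), with equality by dimension. In the second case \(\Lambda\subseteq\bigwedge^2U=B_{[\varphi]}\), where \(U=\ker\varphi\), again with equality by dimension.

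The main obstacle is making this dichotomy argument clean for a linear span, because sums of decomposables need not visibly stay in \(x\wedge W\). I would avoid the issue by arguing with a basis \(\alpha_1,\alpha_2,\alpha_3\) of \(\Lambda\) and checking that all of \(\Lambda\) lies in the linear span \(x\wedge W\) or \(\bigwedge^2U\). Both targets are linear subspaces, so it suffices that the three basis vectors lie in them. The case analysis is then only on the three planes \(P_1,P_2,P_3\). For them, pairwise intersection forces either a common line or containment in a common \(3\)-space. I would prove this by picking \(x_{12}\in P_1\cap P_2\) and examining whether \(x_{12}\in P_3\); if not, \(P_3\) meets \(P_1\) and \(P_2\) in lines, and then \(P_1+P_2\) is \(3\)-dimensional and contains \(P_3\).
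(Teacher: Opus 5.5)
Your proof is correct, but after the shared first step it takes a different route from the paper. That shared step is that for \(p\) odd isotropic \(2\)-vectors are decomposable, and orthogonal decomposables correspond to \(2\)-planes in \(W\) that meet nontrivially.

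\emph{The paper's route.} It normalizes two independent elements of the Lagrangian to \(w_1\wedge w_2\) and \(w_1\wedge w_3\) and writes a general element as \(\eta=\sum a_{ij}w_i\wedge w_j\). Orthogonality gives \(a_{34}=a_{24}=0\), and the Plücker relation applied to \(\eta\) then gives \(a_{14}a_{23}=0\). It concludes because a linear subspace contained in the union of the two coordinate axes lies in one of them.

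\emph{Your route.} You apply the incidence dichotomy (three pairwise-meeting \(2\)-planes either share a line or lie in a common \(3\)-space) only to the planes \(P_1,P_2,P_3\) of a basis \(\alpha_1,\alpha_2,\alpha_3\). You then use that \(x\wedge W\) and \(\bigwedge^2U\) are linear subspaces to pass from the basis to all of \(\Lambda\).

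\emph{Comparison.} Your argument is coordinate-free and needs decomposability only for the three basis vectors. Pairwise orthogonality of a basis of isotropic vectors already forces total isotropy, so nothing else is lost. The paper's argument uses decomposability of every element, but makes the dichotomy fall out mechanically from a single quadratic equation.

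Two small points should be stated explicitly in your version:
\begin{itemize}
\item The \(P_i\) are pairwise distinct, because \(\bigwedge^2P\) is a line and the \(\alpha_i\) are independent. This is why \(P_1\cap P_2\) is exactly a line.
\item In the second case, the lines \(P_3\cap P_1\) and \(P_3\cap P_2\) are distinct, since otherwise that common line would equal \(P_1\cap P_2\ni x_{12}\). This is what gives \(P_3\subseteq P_1+P_2\).
\end{itemize}
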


\begin{proof}
First, \(A_{[x]}\) has dimension \(3\), since the kernel of
\[
W\to\bigwedge^2W,
\qquad a\mapsto x\wedge a,
\]
is \(\langle x\rangle\).  It is isotropic because any two of its elements
share the factor \(x\).  Similarly, if \(H=\ker\varphi\), then
\(B_{[\varphi]}=\bigwedge^2H\) has dimension \(3\), and the wedge product
of two elements of \(\bigwedge^2H\) lies in \(\bigwedge^4H=0\).  Thus both
families consist of maximal isotropic \(3\)-planes.

Conversely, let \(L\subset\bigwedge^2W\) be a maximal isotropic
\(3\)-plane.  Since \(p\) is odd, a \(2\)-vector
\(\eta\in\bigwedge^2W\) is decomposable if and only if
\(\eta\wedge\eta=0\).  Every nonzero element of \(L\) satisfies this
equation, since \(L\) is isotropic.

Choose two linearly independent elements of \(L\).  They are decomposable
and have wedge product zero, so the corresponding \(2\)-planes in \(W\)
intersect nontrivially.  After changing basis in \(W\), we may assume
that the two elements are
\[
w_1\wedge w_2,
\qquad
w_1\wedge w_3.
\]
Let
\[
\eta=\sum_{1\le i<j\le4}a_{ij}w_i\wedge w_j\in L.
\]
The equations
\[
(w_1\wedge w_2)\wedge\eta=0,
\qquad
(w_1\wedge w_3)\wedge\eta=0
\]
force \(a_{34}=0\) and \(a_{24}=0\).  Thus
\[
\eta=
 a_{12}w_1w_2+a_{13}w_1w_3+a_{14}w_1w_4+a_{23}w_2w_3,
\]
where \(w_iw_j\) abbreviates \(w_i\wedge w_j\).  For a \(2\)-vector in dimension \(4\), decomposability is equivalent to
the Plücker relation.  In the present coordinates,
\[
\eta\wedge\eta
=
2(a_{12}a_{34}-a_{13}a_{24}+a_{14}a_{23})\operatorname{vol}.
\]
Since \(p\ne2\), \(\eta\wedge\eta=0\) is equivalent to
\[
a_{12}a_{34}-a_{13}a_{24}+a_{14}a_{23}=0.
\]
Using \(a_{34}=a_{24}=0\), this becomes
\[
a_{14}a_{23}=0.
\]
Since \(L\) is linear and \(p\ne2\), its projection to the
\((a_{14},a_{23})\)-plane is a linear subspace contained in the union of
the two coordinate axes.  Hence it is contained in one coordinate axis.
Therefore either \(a_{23}=0\) for all \(\eta\in L\), or \(a_{14}=0\) for
all \(\eta\in L\).

If \(a_{23}=0\) for all \(\eta\in L\), then
\[
L\subset w_1\wedge W=A_{[w_1]}.
\]
Both sides have dimension \(3\), so the equality holds.
If \(a_{14}=0\) for all \(\eta\in L\), then
\[
L\subset \bigwedge^2\langle w_1,w_2,w_3\rangle=B_{[w_4^*]}.
\]
Again both sides have dimension \(3\), so the equality holds.
\end{proof}

\begin{lemma}
\label{lem:Klein-complement}
For \(0\ne x\in W\) and \(0\ne\varphi\in W^*\),
\[
A_{[x]}\oplus B_{[\varphi]}=V
\quad\Longleftrightarrow\quad
\varphi(x)\ne0.
\]
\end{lemma}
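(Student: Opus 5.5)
Since both $A_{[x]}$ and $B_{[\varphi]}$ are $3$-dimensional subspaces of the $6$-dimensional space $V$ (Lemma~\ref{lem:Klein-rulings}), the sum is direct and equals $V$ exactly when $A_{[x]}\cap B_{[\varphi]}=0$. So the plan is to compute this intersection explicitly and show it vanishes precisely when $\varphi(x)\ne0$. No earlier result beyond the dimension counts in the proof of Lemma~\ref{lem:Klein-rulings} is needed.

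For $\varphi(x)\ne0$, put $H=\ker\varphi$; then $x\notin H$ and $W=\langle x\rangle\oplus H$. This gives the standard decomposition
\[
\bigwedge^2W=(x\wedge H)\oplus\bigwedge^2H .
\]
Since $x\wedge W=x\wedge H$, this decomposition is literally $V=A_{[x]}\oplus B_{[\varphi]}$. To make directness concrete, choose a basis $x=w_1$ and $H=\langle w_2,w_3,w_4\rangle$, i.e.\ $\varphi$ proportional to $w_1^*$. Then $A_{[x]}=\langle u_1,u_2,u_3\rangle$ and $B_{[\varphi]}=\langle u_4,u_5,u_6\rangle$, which are complementary coordinate subspaces.

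For $\varphi(x)=0$, we have $x\in H$. Extend $x$ to a basis $x,h$ of a $2$-plane in $H$ (possible since $\dim H=3$). Then $0\ne x\wedge h$ lies both in $x\wedge W=A_{[x]}$ and in $\bigwedge^2H=B_{[\varphi]}$. The intersection is therefore nonzero and the sum is not direct.

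The argument is entirely elementary. The only step needing care is the identification $x\wedge W=x\wedge H$ when $x\notin H$, together with the injectivity of $h\mapsto x\wedge h$ on $H$. Both follow from the kernel computation $\ker(a\mapsto x\wedge a)=\langle x\rangle$ already used in Lemma~\ref{lem:Klein-rulings}, since $\langle x\rangle\cap H=0$.
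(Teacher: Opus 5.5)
Your proof is correct and is essentially the paper's argument. For $\varphi(x)\neq 0$, both take the exterior square of $W=\langle x\rangle\oplus\ker\varphi$; for $\varphi(x)=0$, both exhibit $x\wedge h$ with $h\in\ker\varphi$ in the intersection $A_{[x]}\cap B_{[\varphi]}$, where the paper notes that all of the $2$-dimensional space $x\wedge\ker\varphi$ lies there.
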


\begin{proof}
If \(\varphi(x)=0\), then \(x\in\ker\varphi\).  Hence
\(
x\wedge\ker\varphi\subset A_{[x]}\cap B_{[\varphi]}
\)
is a dimension \(2\) subspace.  Therefore the two \(3\)-planes are not
complementary.

If \(\varphi(x)\ne0\), then
\[
W=\langle x\rangle\oplus\ker\varphi.
\]
Taking exterior squares on both sides gives
\[
\bigwedge^2W
=
x\wedge\ker\varphi
\oplus
\bigwedge^2(\ker\varphi).
\]
Since \(x\wedge W=x\wedge\ker\varphi\), this is precisely
\[
V=A_{[x]}\oplus B_{[\varphi]}.
\]
\end{proof}

\subsection{Three-forms and quadrics}
The fixed volume form \(\operatorname{vol}=w_1\wedge w_2\wedge w_3\wedge w_4 \) identifies \(\bigwedge^4W\) with \(\mathbb F_p\). With this identification, the spinor/Klein model gives a linear decomposition \[ \bigwedge^3(\bigwedge^2W)^* \cong \operatorname{Sym}^2(W^*)\oplus\operatorname{Sym}^2(W). \] Here \(\operatorname{Sym}^2(E)\) denotes the second symmetric power of \(E\). Since \(p\) is odd, we identify \(\operatorname{Sym}^2(W^*)\) with quadratic forms on \(W\), and \(\operatorname{Sym}^2(W)\) with quadratic forms on \(W^*\). We use the following concrete form of this isomorphism. The coordinate formulas in Appendix~\ref{app:coordinates} define quadratic forms \[ Q_\omega\in\operatorname{Sym}^2(W^*), \qquad Q_\omega^\vee\in\operatorname{Sym}^2(W). \] They satisfy \[ \omega|_{A_{[x]}}=0 \quad\Longleftrightarrow\quad Q_\omega(x)=0, \] and \[ \omega|_{B_{[\varphi]}}=0 \quad\Longleftrightarrow\quad Q_\omega^\vee(\varphi)=0. \]

They give an elementary proof of the displayed decomposition over any
field of odd characteristic.  General classifications of six-dimensional
trivectors are classical; see, for example, Revoy \cite{Revoy} and the
summary in Draisma--Shaw \cite{DraismaShaw}.  We do not need those
classifications in this paper.

For a quadratic form \(q\) over \(\mathbb F_p\), \(p\) odd, let
\[
B_q(u,v)=q(u+v)-q(u)-q(v)
\]
be its polar bilinear form.  We write
\[
\rad(q)=\{u\in E:B_q(u,v)=0\text{ for all }v\in E\},
\]
and define \(\operatorname{rank}(q)=\dim E-\dim\rad(q)\).  If
\(R=\rad(q)\), then \(q\) descends to a nondegenerate quadratic form on
\(E/R\).  A binary quadratic form is a quadratic form on a two-dimensional vector space, and it is anisotropic if it has no nonzero zero.

For a finite-dimensional vector space \(E\), write \(\mathbb P(E)\) for the projective space of one-dimensional subspaces of \(E\). If \(q\) is a quadratic form on \(E\), set \[ Z(q)=\{[v]\in\mathbb P(E):q(v)=0\}. \] Thus \(Z(q)\) is the projective zero set of \(q\). We write \(\operatorname{span}Z(q)\subset E\) for the linear span of the vectors representing the points of \(Z(q)\).

\begin{lemma}
\label{lem:quadric-span}
Let \(E\) be a \(4\)-dimensional vector space over \(\mathbb F_p\), with
\(p\) odd, and let \(q\) be a quadratic form on \(E\).  Then
\[
\dim\operatorname{span} Z(q)=2
\]
if and only if \(q\) has rank \(2\), radical a \(2\)-plane \(R\), and the
induced binary quadratic form on \(E/R\) is anisotropic.  In that case
\[
Z(q)=\mathbb P(R).
\]
For all other ranks and types, \(\operatorname{span}Z(q)\) has dimension
\(3\) or \(4\).
\end{lemma}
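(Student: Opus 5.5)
We classify quadratic forms on the $4$-dimensional space $E$ over $\mathbb F_p$ ($p$ odd) by rank and, where it matters, by the isometry type of the nondegenerate part on $E/\rad(q)$. For each type we compute $Z(q)$ and the dimension of its span. Set $R=\rad(q)$ and write $\bar q$ for the induced nondegenerate form on $E/R$, so that $q(v)=\bar q(v+R)$. The basic observation is
\[
Z(q)=\{[v]: v+R\in R\ \text{or}\ [v+R]\in Z(\bar q)\},
\]
that is, $Z(q)$ is the union of $\mathbb P(R)$ and the cone over $Z(\bar q)$ with vertex $\mathbb P(R)$. Hence $\operatorname{span}Z(q)=R+\pi^{-1}(\operatorname{span}Z(\bar q))$, where $\pi\colon E\to E/R$. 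Since $\pi^{-1}(\operatorname{span}Z(\bar q))$ contains $R$ whenever $Z(\bar q)\neq\emptyset$, this gives
\[
\dim\operatorname{span}Z(q)=\dim R+\dim\operatorname{span}Z(\bar q)
\]
in all cases, using the convention that an empty zero set has span $0$. The problem therefore reduces to computing $\operatorname{span}Z(\bar q)$ for a nondegenerate form $\bar q$ of rank $r=4-\dim R$.

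The key nondegenerate input is this: over $\mathbb F_p$, a nondegenerate form of rank $r\ge3$ is isotropic by Chevalley--Warning (every form in at least $3$ variables over a finite field has a nontrivial zero). An isotropic nondegenerate form is then spanned by its isotropic vectors. To see this, write it as $H\perp q'$ with $H$ a hyperbolic plane. For any $u$ in the space of $q'$, the vector $e+q'(u)^{\ldots}$ can be adjusted: the isotropic vectors $e$, $f$ and $u-q'(u)e$ (taking $B(e,f)=1$, $e,f$ isotropic) span everything. So for $r\ge3$ we get $\operatorname{span}Z(\bar q)=E/R$ of dimension $r$. For $r=2$, $\bar q$ is either hyperbolic, with two isotropic lines spanning the plane, or anisotropic, with empty zero set. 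For $r=1$, $\bar q$ is $a x^2$ with $a\ne0$ and has no zeros. For $r=0$, $q=0$ and $Z(q)=\mathbb P(E)$.

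Assembling the cases gives $\dim\operatorname{span}Z(q)=4$ in every case except two. For $r=2$ anisotropic the dimension is $2$, and then $Z(q)=\mathbb P(R)$ exactly. For $r=1$ the dimension is $\dim R=3$. Rank $0$ gives $4$, rank $3$ and $4$ give $4$, and rank $2$ hyperbolic gives $2+2=4$. This proves both the ``if and only if'' and the final sentence of the lemma, since the only other value that occurs is $3$, coming from rank $1$. Which nonsquare coefficient appears in the anisotropic binary form does not matter.

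The only step needing care is verifying that an isotropic nondegenerate form is spanned by its zeros, and the explicit hyperbolic-splitting check above handles it. The rest is bookkeeping over the cone structure.
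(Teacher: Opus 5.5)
Your route is the paper's: a rank-by-rank case analysis using \(\mathbb P(R)\subset Z(q)\), isotropy of nondegenerate forms in at least three variables, and a hyperbolic pair to show that the zero set spans. You cite Chevalley--Warning where the paper counts squares. Your cone formula \(\dim\operatorname{span}Z(q)=\dim R+\dim\operatorname{span}Z(\bar q)\) is correct and neatly packages what the paper does case by case.

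\textbf{The gap.} The one verification you single out as needing care is wrong as written. Use the polar form \(B(x,y)=q(x+y)-q(x)-q(y)\), with \(q(e)=q(f)=0\), \(B(e,f)=1\), and \(u\perp\langle e,f\rangle\), so that \(q(u)=q'(u)\). Then
\(q(u-q'(u)e)=q'(u)+q'(u)^2q(e)-q'(u)B(u,e)=q'(u)\).
Adding multiples of \(e\) to a vector orthogonal to \(e\) never changes its value. So \(u-q'(u)e\) is isotropic only if \(u\) already was, and it does not show that the isotropic vectors span. The unfinished phrase ``\(e+q'(u)^{\ldots}\)'' also indicates the computation was not completed.

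\textbf{The fix.} The correcting vector must involve \(f\). Set \(v_u=u+f-q'(u)e\). Then
\(q(v_u)=q'(u)+B(u,f)-q'(u)B(u,e)-q'(u)B(f,e)=q'(u)-q'(u)=0\),
and \(u=v_u-f+q'(u)e\) lies in the span of isotropic vectors. This is exactly the vector \(-q(u)e+f+u\) the paper uses in its rank-\(3\) and rank-\(4\) cases. To justify the splitting \(H\perp q'\), take any \(f_0\) with \(B(e,f_0)=1\) and put \(f=f_0-q(f_0)e\), as the paper does. With this one-line correction your proof is complete.
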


\begin{proof}
Let \(B_q\) be the polar bilinear form, and set
\[
R=\rad(q)=\ker B_q.
\]
Since \(p\ne2\), \(R\subset Z(q)\), and \(q\) descends to a
nondegenerate quadratic form \(\bar q\) on \(E/R\), whose dimension is
\(\operatorname{rank}(q)\). If \(\operatorname{rank}(q)=0\), then \(q=0\), so \(Z(q)=\mathbb P(E)\).
If \(\operatorname{rank}(q)=1\), then \(\dim R=3\), and the induced
one-dimensional form on \(E/R\) has no nonzero isotropic vector; hence
\(Z(q)=\mathbb P(R)\), which spans a \(3\)-dimensional subspace.

Suppose \(\operatorname{rank}(q)=2\).  Then \(\dim R=2\), and \(\bar q\)
is a nondegenerate binary quadratic form on \(E/R\).  If \(\bar q\) is
anisotropic, then the only zero in \(E/R\) is \(0\), so
\(Z(q)=\mathbb P(R)\), and the span has dimension \(2\).  If \(\bar q\) is
split, then \(Z(\bar q)\subset\mathbb P(E/R)\) consists of two distinct
points.  Their inverse images in \(E\) are two \(3\)-dimensional subspaces
containing \(R\), and together they span all of \(E\).

Suppose \(\operatorname{rank}(q)=3\).  Then \(\dim R=1\), and \(\bar q\)
is a nondegenerate ternary quadratic form.  Every nondegenerate ternary
quadratic form over \(\mathbb F_p\) is isotropic.  Indeed, after
diagonalizing it as \(aX^2+bY^2+cZ^2\), set \(Z=1\).  The two subsets
\[
a(\mathbb F_p)^2,
\qquad
-b(\mathbb F_p)^2-c
\]
of \(\mathbb F_p\) each have cardinality \((p+1)/2\), so they intersect.
Thus the ternary form has a nonzero isotropic vector.  Choose a hyperbolic
pair \(e,f\) with \(q(e)=q(f)=0\) and \(B_q(e,f)=1\) in \(E/R\) and a vector \(u\) spanning its orthogonal
complement, with \(\bar q(u)=a\ne0\).  Then
\[
e,
\qquad
f,
\qquad
-ae+f+u
\]
are isotropic and span \(E/R\).  Since \(R\subset Z(q)\), the projective
zero set spans all of \(E\).

Finally suppose \(\operatorname{rank}(q)=4\).  Since every
nondegenerate quadratic form of dimension at least \(3\) over
\(\mathbb F_p\) is isotropic, choose an isotropic vector \(e\).  By
nondegeneracy choose \(f_0\) with \(B_q(e,f_0)=1\), and replace
\(f_0\) by \(f=f_0-q(f_0)e\).  Then \(q(e)=q(f)=0\) and
\(B_q(e,f)=1\).  Let \(U=\langle e,f\rangle^\perp\). For any \(u\in U\), the
vector
\[
-q(u)e+f+u
\]
is isotropic, and
\[
u=(-q(u)e+f+u)+q(u)e-f.
\]
Thus \(U\), together with \(e\) and \(f\), lies in the span of isotropic
vectors.  Hence \(Z(q)\) spans all of \(E\).
\end{proof}

\begin{theorem}[Spinor/Klein no-dual-null criterion]
\label{thm:Klein-classification}
Let \(p\) be an odd prime, let \(W=\mathbb F_p^4\), and set
\(V=\bigwedge^2W\) with the wedge-pairing quadratic form.  Let
\(
\omega\in\bigwedge^3V^*
\)
correspond to
\[
(Q,Q^\vee)
\in
\operatorname{Sym}^2(W^*)\oplus\operatorname{Sym}^2(W).
\]
Then \(\omega\) has no dual null Lagrangian pair if and only if there is
a \(2\)-plane \(R\subset W\) such that
\[
\rad(Q)=R,
\]
the induced binary quadratic form on \(W/R\) is anisotropic,
\[
\rad(Q^\vee)=R^\perp\subset W^*,
\]
and the induced binary quadratic form on \(W^*/R^\perp\) is anisotropic.
Equivalently,
\[
Z(Q)=\mathbb P(R)\subset\mathbb P(W),
\qquad
Z(Q^\vee)=\mathbb P(R^\perp)\subset\mathbb P(W^*).
\]
\end{theorem}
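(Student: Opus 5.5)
We reduce the statement to a combinatorial condition on the two projective zero sets, using Lemmas~\ref{lem:Klein-rulings} and~\ref{lem:Klein-complement} together with the defining properties of \(Q=Q_\omega\) and \(Q^\vee=Q^\vee_\omega\).

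\textbf{Step 1: translate ``dual null Lagrangian pair'' into zero sets.} A dual null Lagrangian pair is a decomposition \(V=L_1\oplus L_2\) into maximal isotropic \(3\)-planes with \(\omega|_{L_1}=\omega|_{L_2}=0\). Two maximal isotropic planes from the same Klein ruling meet in even codimension, so they intersect in dimension \(1\) or \(3\). Hence complementary pairs must come from opposite rulings, and we may write \(L_1=A_{[x]}\), \(L_2=B_{[\varphi]}\). By Lemma~\ref{lem:Klein-complement} this pair is complementary iff \(\varphi(x)\neq 0\). By the properties of \(Q\) and \(Q^\vee\), both planes are null iff \(Q(x)=0\) and \(Q^\vee(\varphi)=0\). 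Therefore
\[
\omega \text{ has no dual null pair}
\iff
\varphi(x)=0 \text{ for all } [x]\in Z(Q),\ [\varphi]\in Z(Q^\vee).
\]
Writing \(S=\operatorname{span}Z(Q)\subset W\) and \(T=\operatorname{span}Z(Q^\vee)\subset W^*\), the right-hand side is equivalent to \(T\subset S^\perp\). The claim that same-ruling planes are never complementary can be checked directly. Two distinct planes \(A_{[x]},A_{[y]}\) meet in \(\langle x\wedge y\rangle\). Two planes \(B_{[\varphi]},B_{[\psi]}\) meet in \(\bigwedge^2(\ker\varphi\cap\ker\psi)\), which is nonzero.

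\textbf{Step 2: dimension count.} Each of \(S\) and \(T\) contains the corresponding radical, since \(p\) is odd, and is nonzero. Indeed, Lemma~\ref{lem:quadric-span} shows that every quadratic form on a \(4\)-dimensional space has zero-set span of dimension at least \(2\); the nondegenerate and rank-\(\le 3\) cases all give dimension at least \(2\) there.

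Now \(T\subset S^\perp\) forces \(\dim S+\dim T\le 4\), so \(\dim S=\dim T=2\) and \(T=S^\perp\). Lemma~\ref{lem:quadric-span} then says:
\begin{itemize}
\item \(Q\) has radical a \(2\)-plane \(R\), with anisotropic quotient, and \(S=R\);
\item \(Q^\vee\) has radical a \(2\)-plane \(R'\), with anisotropic quotient, and \(T=R'\).
\end{itemize}
Hence \(R'=R^\perp\).

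Conversely, under the stated conditions Lemma~\ref{lem:quadric-span} gives \(Z(Q)=\mathbb P(R)\) and \(Z(Q^\vee)=\mathbb P(R^\perp)\). Then \(\varphi(x)=0\) for every admissible pair, so no dual null pair exists. The final ``equivalently'' clause is also immediate from Lemma~\ref{lem:quadric-span}: \(Z(Q)=\mathbb P(R)\) with \(R\) a \(2\)-plane spans dimension \(2\), which forces the radical and anisotropy conditions, and likewise for \(Q^\vee\).

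\textbf{Main obstacle.} The only delicate point is the reduction in Step 1 to opposite rulings, since Lemma~\ref{lem:Klein-rulings} classifies maximal isotropic planes but does not state the intersection parity. The direct check above handles it. Everything else rests on the stated properties of \(Q_\omega\) and \(Q_\omega^\vee\) from the Appendix, which we take as given.
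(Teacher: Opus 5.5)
Your proposal is correct and is essentially the paper's proof. You rule out same-ruling pairs by the same direct intersection check, reduce the no-dual-null condition to \(T\subseteq S^\perp\) for the spans of \(Z(Q)\) and \(Z(Q^\vee)\), and use the bound \(\dim\operatorname{span}Z\ge 2\) from Lemma~\ref{lem:quadric-span} to force \(\dim S=\dim T=2\) and \(T=S^\perp\); your explicit derivation of the final ``equivalently'' clause from the same lemma is a small step the paper leaves implicit.
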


\begin{proof}
Let
\[
X=Z(Q)\subset\mathbb P(W),
\qquad
Y=Z(Q^\vee)\subset\mathbb P(W^*),
\]
and set
\[
S=\operatorname{span}(X)\subset W,
\qquad
T=\operatorname{span}(Y)\subset W^*.
\]
By Lemma~\ref{lem:Klein-rulings}, every maximal isotropic \(3\)-plane is
of type \(A\) or type \(B\).  Two planes in the same ruling are never
complementary: if \(x,y\) are independent, then
\(A_{[x]}\cap A_{[y]}\) contains \(\langle x\wedge y\rangle\), and if
\(\varphi,\psi\) are independent, then
\(B_{[\varphi]}\cap B_{[\psi]}\) contains
\(\bigwedge^2(\ker\varphi\cap\ker\psi)\).  Thus a dual pair must consist
of one \(A\)-plane and one \(B\)-plane.

The condition that no null \(A\)-plane and null \(B\)-plane are
complementary is
\(
\varphi(x)=0
\)
for all \([x]\in X\) and all \([\varphi]\in Y\).  By linearity this is
equivalent to
\(
T\subseteq S^\perp.
\)

By Lemma~\ref{lem:quadric-span}, both \(S\) and \(T\) have dimension at
least \(2\).  Since
\(
\dim S^\perp=4-\dim S,
\)
the containment \(T\subseteq S^\perp\) forces
\(
\dim S=2\) and \(\dim T=2,
\)
and then \(T=S^\perp\).  Applying Lemma~\ref{lem:quadric-span} to \(Q\)
and \(Q^\vee\), we obtain the stated rank-two anisotropic description,
with \(R=S\).

Conversely, if \(Q\) and \(Q^\vee\) have the stated form, then
\[
Z(Q)=\mathbb P(R),
\qquad
Z(Q^\vee)=\mathbb P(R^\perp).
\]
Every element of \(R^\perp\) annihilates every element of \(R\), so no
null \(A\)-plane and null \(B\)-plane are complementary.
\end{proof}

\subsection{Explicit no-dual-null tensors}

Choose a nonsquare \(\delta\in\mathbb F_p^\times\).  Define
\[
\omega_\delta
=
-\delta\,u_2^*\wedge u_3^*\wedge u_6^*
+
u_2^*\wedge u_4^*\wedge u_6^*
-\delta\,u_3^*\wedge u_5^*\wedge u_6^*
+
u_4^*\wedge u_5^*\wedge u_6^*.
\]
Using the coordinate formulas of Appendix~\ref{app:coordinates}, the
associated quadrics are
\[
Q_{\omega_\delta}(x)=x_3^2-\delta x_4^2,
\qquad
Q_{\omega_\delta}^{\vee}(\varphi)=\varphi_1^2-\delta\varphi_2^2.
\]
Thus \(Q_{\omega_\delta}\) has radical
\(
R=\langle w_1,w_2\rangle
\)
and induces the anisotropic binary form \(x_3^2-\delta x_4^2\) on
\(W/R\).  Similarly, \(Q_{\omega_\delta}^\vee\) has radical
\(
R^\perp=\langle w_3^*,w_4^*\rangle
\)
and induces the anisotropic binary form
\(\varphi_1^2-\delta\varphi_2^2\) on \(W^*/R^\perp\).  Theorem
\ref{thm:Klein-classification} therefore implies that \(\omega_\delta\)
has no dual null Lagrangian pair.

\begin{corollary}
\label{cor:goodforms}
For every odd prime \(p\), there exists a split \(6\)-dimensional
quadratic space \(V\) over \(\mathbb F_p\) and a tensor
\(
\omega_p\in\bigwedge^3V^*
\)
with no dual null Lagrangian pair.
\end{corollary}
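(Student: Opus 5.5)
The corollary follows directly from the explicit tensor \(\omega_\delta\) built just above, once its existence over every \(\mathbb F_p\) and its no-dual-null property are confirmed. The steps are as follows.

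Fix an odd prime \(p\). Take \(V=\bigwedge^2W\) with \(W=\mathbb F_p^4\), equipped with the wedge-pairing form. The basis computation preceding Lemma~\ref{lem:Klein-rulings} shows that \(V\) is a split \(6\)-dimensional quadratic space.

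Next, pick a nonsquare \(\delta\in\mathbb F_p^\times\). One exists because \(p\) is odd: the squaring map on \(\mathbb F_p^\times\) is two-to-one, so exactly \((p-1)/2\) elements are squares. Set \(\omega_p=\omega_\delta\). Using the coordinate formulas of Appendix~\ref{app:coordinates}, the associated quadrics are
\[
Q_{\omega_\delta}(x)=x_3^2-\delta x_4^2,
\qquad
Q_{\omega_\delta}^\vee(\varphi)=\varphi_1^2-\delta\varphi_2^2 .
\]

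We then check the hypotheses of Theorem~\ref{thm:Klein-classification} with \(R=\langle w_1,w_2\rangle\).

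\begin{itemize}
\item Since the form \(Q\) involves only \(x_3,x_4\) and its polar form on those coordinates is \(\operatorname{diag}(2,-2\delta)\), which is invertible because \(p\ne2\), we get \(\rad(Q)=R\).
\item The induced form \(x_3^2-\delta x_4^2\) on \(W/R\) is anisotropic. A nonzero zero would force \(x_4\ne0\), and then \(\delta=(x_3/x_4)^2\) would be a square, a contradiction.
\item The same argument gives \(\rad(Q^\vee)=\langle w_3^*,w_4^*\rangle\), which equals \(R^\perp\), and shows that the induced form on \(W^*/R^\perp\) is anisotropic.
\end{itemize}

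Theorem~\ref{thm:Klein-classification} then gives that \(\omega_p\) has no dual null Lagrangian pair.

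The only real obstacle is trusting the Appendix computation of \(Q_{\omega_\delta}\) and \(Q^\vee_{\omega_\delta}\). If that computation is in doubt, one can instead verify the conclusion directly from the defining property of \(Q\). That property is that \(\omega|_{A_{[x]}}=0\) if and only if \(Q(x)=0\), and likewise for \(B_{[\varphi]}\).

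To do this, compute \(\omega_\delta\) on the basis \(x\wedge w_a\) of \(A_{[x]}\) and on the basis \(\bigwedge^2\ker\varphi\) of \(B_{[\varphi]}\). Every term of \(\omega_\delta\) contains \(u_6^*\), so each restriction reduces to a \(3\times3\) determinant. Checking that it vanishes exactly when \(x_3^2=\delta x_4^2\), respectively \(\varphi_1^2=\delta\varphi_2^2\), is a routine expansion.

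Once null \(A\)-planes correspond to points of \(\mathbb P(R)\) and null \(B\)-planes to points of \(\mathbb P(R^\perp)\), the conclusion is immediate. Every such pair \((x,\varphi)\) has \(\varphi(x)=0\), so by Lemma~\ref{lem:Klein-complement} the corresponding planes are not complementary. Two planes from the same ruling are never complementary either. Hence \(\omega_p\) has no dual null Lagrangian pair.
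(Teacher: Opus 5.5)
Your proposal is correct and follows the paper's own route. You take \(V=\bigwedge^2\mathbb F_p^4\) with the wedge pairing and \(\omega_p=\omega_\delta\) for a nonsquare \(\delta\), read off \(Q_{\omega_\delta}=x_3^2-\delta x_4^2\) and \(Q^\vee_{\omega_\delta}=\varphi_1^2-\delta\varphi_2^2\) from the appendix, and apply Theorem~\ref{thm:Klein-classification} with \(R=\langle w_1,w_2\rangle\). Your extra checks (existence of a nonsquare, the radical and anisotropy computations, and the optional direct evaluation of \(\omega_\delta\) on spanning vectors of \(A_{[x]}\) and \(B_{[\varphi]}\)) fill in details the paper leaves implicit; that evaluation does give a coordinate of \(x\), resp.\ \(\varphi\), times \(Q_{\omega_\delta}(x)\), resp.\ \(Q^\vee_{\omega_\delta}(\varphi)\).
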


\begin{proof}
Take \(V=\bigwedge^2\mathbb F_p^4\) with the wedge-pairing form, and take
\(\omega_p=\omega_\delta\) for any nonsquare
\(\delta\in\mathbb F_p^\times\).
\end{proof}

\begin{remark}
\label{rem:diagonal-basis}
The wedge-pairing form on \(V=\bigwedge^2W\) is split. In order to apply Theorem~\ref{thm:intro-C} to the tensor \(\omega_\delta\), one needs to express the spinor/Klein model in a basis whose form is the diagonal form \[ \langle1,1,1,-1,-1,-1\rangle \] appearing in Lemma~\ref{lem:H1-lambda2}, equivalently in the meridian basis of the six-component \(\pm p\)-framed surgery construction. 
Appendix~\ref{app:diagonal} records the coefficient vector of
\(8\omega_\delta\) for such a suitably chosen dual diagonal basis. 
\end{remark}

\begin{corollary}[All odd-prime nonembedding examples]
\label{cor:all-prime}
For every odd prime \(p\), there exists a closed oriented rational
homology \(3\)-sphere \(M_p\) such that
\(
H_1(M_p;\mathbb Z)\cong(\mathbb Z/p)^6
\)
and
\[
\lambda_2(M_p)
\cong
\frac1p\langle1,1,1,-1,-1,-1\rangle
\cong H\oplus H\oplus H,
\]
where \(H\) denotes the standard hyperbolic linking form on
\((\mathbb Z/p)^2\). However, \(M_p\) admits no locally flat embedding in \(S^4\), and no
locally flat embedding in any integer homology \(4\)-sphere.
\end{corollary}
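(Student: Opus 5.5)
The plan is to combine the algebraic existence result, Corollary~\ref{cor:goodforms}, with the realization theorem, Theorem~\ref{thm:intro-C}, and then apply Corollary~\ref{cor:realized-obstruction}.

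The only gap between these results is a change of basis. The tensor \(\omega_\delta\) lives on the Klein model \(V=\bigwedge^2W\) with the wedge-pairing form. Corollary~\ref{cor:realized-obstruction}, by contrast, is phrased for the diagonal split space \((\mathbb F_p^6,\langle1,1,1,-1,-1,-1\rangle)\).

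\textbf{Step 1: an isometry.} Choose an isometry
\[
\Phi\colon(\mathbb F_p^6,\langle1,1,1,-1,-1,-1\rangle)\to V .
\]
It exists because both forms are nondegenerate of dimension \(6\) with a \(3\)-dimensional isotropic subspace, hence hyperbolic. Explicitly, the three hyperbolic planes \(\langle u_1,u_6\rangle\), \(\langle u_2,u_5\rangle\), \(\langle u_3,u_4\rangle\) each have an orthogonal basis of the form \(e\pm f\) (suitably rescaled). Rescaling these to norms \(\pm1\) uses \(2\) and its inverse, since \(p\) is odd; this is presumably the factor \(8\) appearing in Appendix~\ref{app:diagonal}. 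If some \(e+f\) and \(e-f\) have norms \(2c\) and \(-2c\) with \(2c\) not a square, I will instead use the fact that a hyperbolic plane \(H\) is isometric to \(\langle1,-1\rangle\) over any field of odd characteristic.

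\textbf{Step 2: transport the tensor.} Set \(\tau=\Phi^*\omega_\delta\in\bigwedge^3(\mathbb F_p^6)^*\). Isometries carry maximal isotropic subspaces to maximal isotropic subspaces and preserve complementarity and nullity for the tensor. So \(\tau\) has no dual null Lagrangian pair, because \(\omega_\delta\) has none by Theorem~\ref{thm:Klein-classification} (equivalently, Corollary~\ref{cor:goodforms}).

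\textbf{Step 3: realize and obstruct.} Let \(M_p=M_{p,\tau}\) be the manifold from Theorem~\ref{thm:intro-C}. Lemma~\ref{lem:H1-lambda2} gives \(H_1\cong(\mathbb Z/p)^6\) and \(\lambda_2\cong\frac1p\langle1,1,1,-1,-1,-1\rangle\). This linking form is \(H\oplus H\oplus H\), by the same fact that \(\langle1,-1\rangle\cong H\), now for \(\tfrac1p\mathbb Z/\mathbb Z\)-valued forms, applied to the pairs \(\{m_1,m_4\}\), \(\{m_2,m_5\}\), \(\{m_3,m_6\}\). Explicitly, \(m_1+m_4\) and \((m_1-m_4)/2\) span a hyperbolic pair. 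Corollary~\ref{cor:realized-obstruction} then gives the nonembedding statements.

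\textbf{Main obstacle: matching the bases.} The point needing care is compatibility between the cohomology basis \(\alpha_i\) (dual to the meridians) and the form used in Corollary~\ref{cor:realized-obstruction}. The linking dual satisfies \(m_i^\lambda=\varepsilon_i\alpha_i\), so the form that \(\lambda_2\) transports to \(H^1\) in the \(\alpha\)-basis is again \(\langle1,1,1,-1,-1,-1\rangle\), since \(\varepsilon_i^2=1\). Hantzsche Lagrangians therefore correspond to maximal isotropic \(3\)-planes of exactly the diagonal quadratic space on which \(\tau\) is written, and no extra sign twisting is needed. I will verify this bookkeeping explicitly, since a sign error there would replace \(\tau\) by \(\varepsilon\)-twisted coefficients. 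Even then, those coefficients only rescale basis vectors, which is an isometry of the diagonal form, so the no-dual-null property would persist in either case.
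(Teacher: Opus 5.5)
Your proposal is correct and follows essentially the same route as the paper. The paper takes \(\omega_\delta\) from Corollary~\ref{cor:goodforms}, identifies \(V\) with \((\mathbb F_p^6,\langle1,1,1,-1,-1,-1\rangle)\) (the diagonal basis \(d_i=u_1\pm\tfrac12u_6,\dots\) of Appendix~\ref{app:diagonal} is exactly such an isometry), realizes the tensor by Theorem~\ref{thm:intro-C}, and concludes with Corollary~\ref{cor:realized-obstruction} and Lemma~\ref{lem:H1-lambda2}. Your explicit checks fill in bookkeeping the paper leaves implicit: the hyperbolic pair \(m_1+m_4,\ \tfrac12(m_1-m_4)\), and the fact that \(m_i^\lambda=\varepsilon_i\alpha_i\) still gives the diagonal form in the \(\alpha\)-basis.
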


\begin{proof}
Let \(p\) be an odd prime.  By Corollary~\ref{cor:goodforms}, choose a
split \(6\)-dimensional quadratic space \(V\) over \(\mathbb F_p\) and a
form
\(
\omega_p\in\bigwedge^3V^*
\)
with no dual null Lagrangian pair.  Identify \(V\) with
\[
(\mathbb F_p^6,\langle1,1,1,-1,-1,-1\rangle).
\]
Apply Theorem~\ref{thm:intro-C}, with \(\tau=\omega_p\), to obtain
\(M_{p,\omega_p}\).  By Corollary~\ref{cor:realized-obstruction}, this
manifold admits no locally flat embedding in \(S^4\), nor in any integer
homology \(4\)-sphere.  Lemma~\ref{lem:H1-lambda2} gives the displayed
homology group and linking form.  Set \(M_p=M_{p,\omega_p}\).
\end{proof}

\appendix

\section{Coordinate formulas for the spinor/Klein construction}
\label{app:coordinates}
Throughout this appendix we use the ordered basis \(w_1,\ldots,w_4\) and the volume form \[ \operatorname{vol}=w_1\wedge w_2\wedge w_3\wedge w_4 \] fixed in Section~\ref{sec:klein}; equivalently, we identify \(\bigwedge^4W\) with \(\mathbb F_p\) by sending \(\operatorname{vol}\) to \(1\).
Let
\[
\omega=
\sum_{1\le i<j<k\le6}c_{ijk}\,
u_i^*\wedge u_j^*\wedge u_k^*
\in \bigwedge^3(\bigwedge^2W)^*.
\]

For \(x=(x_1,x_2,x_3,x_4)\in W\), set
\[
r_j=x\wedge w_j,
\qquad j=1,2,3,4.
\]
In the \(u\)-basis,
\[
r_1=(-x_2,-x_3,-x_4,0,0,0),
\]
\[
r_2=(x_1,0,0,-x_3,-x_4,0),
\]
\[
r_3=(0,x_1,0,x_2,0,-x_4),
\]
\[
r_4=(0,0,x_1,0,x_2,x_3).
\]
These vectors span \(A_{[x]}\).  Direct evaluation gives
\[
\omega|_{A_{[x]}}=0
\quad\Longleftrightarrow\quad
Q_\omega(x)=0,
\]
where
\[
Q_\omega(x)=
 c_{123}x_1^2
 +(c_{125}-c_{134})x_1x_2
 +(c_{126}-c_{234})x_1x_3
 +(c_{136}-c_{235})x_1x_4
\]
\[
+c_{145}x_2^2
+(c_{146}+c_{245})x_2x_3
+(c_{156}+c_{345})x_2x_4
+c_{246}x_3^2
\]
\[
+(c_{256}+c_{346})x_3x_4
+c_{356}x_4^2.
\]

For \(\varphi=(\varphi_1,\varphi_2,\varphi_3,\varphi_4)\in W^*\), the
following vectors span \(B_{[\varphi]}\):
\[
s_1=(0,0,0,-\varphi_4,\varphi_3,-\varphi_2),
\]
\[
s_2=(0,\varphi_4,-\varphi_3,0,0,\varphi_1),
\]
\[
s_3=(-\varphi_4,0,\varphi_2,0,-\varphi_1,0),
\]
\[
s_4=(\varphi_3,-\varphi_2,0,\varphi_1,0,0).
\]
Direct evaluation gives
\[
\omega|_{B_{[\varphi]}}=0
\quad\Longleftrightarrow\quad
Q_\omega^\vee(\varphi)=0,
\]
where
\[
Q_\omega^\vee(\varphi)
=
 c_{456}\varphi_1^2
 +(-c_{256}+c_{346})\varphi_1\varphi_2
 +(c_{156}-c_{345})\varphi_1\varphi_3
\]
\[
+(-c_{146}+c_{245})\varphi_1\varphi_4
+c_{236}\varphi_2^2
+(-c_{136}-c_{235})\varphi_2\varphi_3
\]
\[
+(c_{126}+c_{234})\varphi_2\varphi_4
+c_{135}\varphi_3^2
+(-c_{125}-c_{134})\varphi_3\varphi_4
+c_{124}\varphi_4^2.
\]

The map
\[
\omega\longmapsto(Q_\omega,Q_\omega^\vee)
\]
is a linear isomorphism
\[
\bigwedge^3(\bigwedge^2W)^*
\cong
\operatorname{Sym}^2(W^*)\oplus\operatorname{Sym}^2(W).
\]
Both sides have dimension \(20\), and the displayed formulas recover all
coefficients \(c_{ijk}\).  The mixed coefficients are recovered from
pairs of linear equations with determinant \(\pm2\), which is invertible
because \(p\) is odd.

\section{Explicit diagonal coefficient vectors}
\label{app:diagonal}

In Section~\ref{sec:klein} we constructed, for every odd prime \(p\), a no-dual-null tensor \(\omega_\delta\) in the spinor/Klein model. Here we record additional coordinate data: after changing from the spinor/Klein basis to a chosen diagonal basis compatible with the surgery construction, we give an explicit coefficient vector for the nonzero scalar multiple \(8\omega_\delta\). These coefficients can be used as the cup/Milnor coefficients in Theorem~\ref{thm:intro-C}, since multiplication by \(8\in\mathbb F_p^\times\) does not change the set of null Lagrangians. The associated meridian coefficients of \(\kappa_p(\lambda_3^p)\) are obtained from them by the framing-sign twist of Remark~\ref{rem:coefficient-conventions}.

The coefficient vector of \(8\omega_\delta\), written with respect to the
dual basis of the diagonal basis
\[
d_1=u_1+\frac12u_6,
\quad
d_2=u_2-\frac12u_5,
\quad
d_3=u_3+\frac12u_4,
\]
\[
d_4=u_1-\frac12u_6,
\quad
d_5=u_2+\frac12u_5,
\quad
d_6=u_3-\frac12u_4,
\]
is
\[
v_\delta=
[
3-6\delta,0,0,-6\delta-3,0,2\delta-1,0,0,0,-2\delta-1,
\]
\[
6\delta-3,0,0,0,-6\delta-3,0,2\delta-1,0,0,2\delta+1
],
\]
in lexicographic order
\[
123,124,125,126,134,135,136,145,146,156,
234,235,236,245,246,256,345,346,356,456.
\] Equivalently, the \(ijk\)-entry of this vector is \[ 8\omega_\delta(d_i,d_j,d_k), \qquad i<j<k. \]
This is the cup-tensor coefficient vector in the diagonal cohomology
basis.  The factor \(8\) clears the
denominators from the \(1/2\)'s in the change of basis.

For \(p\equiv3\pmod4\), we take \(\delta=-1\).  The fixed integral cup
coefficient vector is
\[
[
9,0,0,3,0,-3,0,0,0,1,
-9,0,0,0,3,0,-3,0,0,-1
].
\]
For the surgery construction with framings
\(
\varepsilon=(+,+,+,-,-,-),
\)
the corresponding meridian coefficient vector is obtained from
the cup coefficient vector by the linking-duality sign twist.  Namely,
since
\[
m_i^\lambda=\varepsilon_i\alpha_i,
\]
a cup coefficient \(\tau_{ijk}\) gives meridian coefficient
\[
n_{ijk}=\varepsilon_i\varepsilon_j\varepsilon_k\tau_{ijk}.
\]
For \(\delta=-1\), this gives the meridian coefficient vector
\[
[
9,0,0,-3,0,3,0,0,0,1,
9,0,0,0,3,0,-3,0,0,1
].
\]
Thus the fixed integral cup coefficient vector displayed above is the
cup/Milnor coefficient vector used for the Borromean construction, while
the final vector is the corresponding meridian coefficient vector for
\(\kappa_p(\lambda_3^p)\), obtained
by the framing-sign twist of Remark~\ref{rem:coefficient-conventions}.

\bigskip
\noindent\textsc{Yau Mathematical Sciences Center, Tsinghua University}\\
\textit{Email:} \texttt{weizheniu@mail.tsinghua.edu.cn}

\end{document}